\newtheorem{thm}{Theorem}
\newtheorem{lem}{Lemma}
\newtheorem{mydef}{Definition}
\def\[#1\]{\begin{align*}#1\end{align*}}
\newcommand\norm[1]{\left\lVert#1\right\rVert}
\newcommand{\bb}{\boldsymbol }
\def\ps@pprintTitle{%
 \let\@oddhead\@empty
 \let\@evenhead\@empty
 \def\@oddfoot{}%
 \let\@evenfoot\@oddfoot}
\begin{document}

\begin{frontmatter}

\title{Conservative Integrators for Many--body Problems}

\author[UNBC]{Andy T. S. Wan\corref{cor1}}
\ead{andy.wan@unbc.ca}
\author[MUN]{Alexander Bihlo}
\ead{abihlo@mun.ca}
\author[McGill]{Jean-Christophe Nave}
\ead{jcnave@math.mcgill.ca}
\cortext[cor1]{Corresponding author}

\address[UNBC]{Department of Mathematics and Statistics, University of Northern British Columbia,\\ Prince George, BC, V2N 4Z9, Canada}
\address[MUN]{Department of Mathematics and Statistics, Memorial University of Newfoundland,\\ St.\ John's, NL, A1C 5S7, Canada}
\address[McGill]{Department of Mathematics and Statistics, McGill University, Montr\'{e}al, QC, H3A 0B9, Canada}

\begin{abstract}
Conservative symmetric second--order one--step schemes are derived for dynamical systems describing various many--body systems using the Discrete Multiplier Method. This includes conservative schemes for the $n$-species Lotka--Volterra system, the $n$-body problem with radially symmetric potential and the $n$-point vortex models in the plane and on the sphere. In particular, we recover Greenspan--Labudde's conservative schemes for the $n$-body problem. Numerical experiments are shown verifying the conservative property of the schemes and second--order accuracy.   

\end{abstract}

\begin{keyword}
dynamical systems \sep conserved quantity \sep first integral \sep conservative methods \sep Discrete Multiplier Method \sep long-term stability \sep divided difference \sep many--body system \sep Lotka--Volterra equations \sep point vortex equations

\end{keyword}

\end{frontmatter}

\section{Introduction}

The general many--body problem is of great importance in the mathematical sciences. Except for particular configurations such as in ~\cite{calo03a}, the governing equations of $n\geqslant3$ bodies in the classical $n$-body problem cannot be integrated analytically and in general one has to resort to numerical simulations. Similar statements are also true for other nonlinear many--body problems, which also require numerical integrations for large number of bodies. Furthermore, the underlying equations of many--body problems typically have rich geometric structures, such as simplified real-world phenomena including the $n$-species Lotka--Volterra systems and the point vortex system described in this paper. Examples of geometric structures include variational formulations, the existence of first integrals and the invariance under certain coordinate transformations. Note that throughout this paper, we use the terms first integral, invariants of motion, and conserved quantities interchangeably. In order to preserve these structures numerically, special classes of numerical methods, called \textit{geometric numerical integrators}~\cite{blan16a,hair06Ay,leim04Ay} are often employed for this purpose.

Within the field of geometric numerical integration, finding numerical schemes that preserve an underlying Hamiltonian structure of a system of ordinary differential equations (ODEs), so-called symplectic integrators, has been historically of prime interest in recent decades. Indeed, early examples of such numerical schemes date back to the 1950s~\cite{deVo56a}, with other important early contributions found in~\cite{ruth83a} and~\cite{feng86a}, and a first monograph written in the 1990s by~\cite{sanz94a}. For more recent expositions, see for example in~\cite{blan16a,hair06Ay,leim04Ay,mars01a}.

While Hamiltonian systems are important in the mathematical sciences, there are some important restrictions that limit the general applicability of the Hamiltonian framework. Amongst the most important restrictions are systems exhibiting dissipation and systems that cannot be brought easily into a canonical Hamiltonian form, such as dynamical systems with an odd number of degrees of freedom. While for the latter, the more general Poisson geometry and associated Poisson integrators are available~\cite{hair06Ay}, these numerical schemes are not as universally applicable as the symplectic schemes for canonical Hamiltonian systems.

Symplectic integrators also do not preserve arbitrary first integrals of differential equations\footnote{In fact, it is known from \cite{zhon88a} that for Hamiltonian systems without additional first integrals, if a symplectic integrator using an uniform time step size is also energy--preserving, then it is an exact integrator; that is, the discrete flow reproduces the exact flow up to a time reparameterization.}. While the Hamiltonian is nearly conserved over exponentially long time periods~\cite{calv95a} and linear and quadratic first integrals can be preserved by some symplectic Runge--Kutta methods~\cite{hair06Ay,leim04Ay}, higher--order polynomial conserved quantities or first integrals of arbitrary form are generally not preserved with symplectic methods~\cite{feng86a}. If one is interested in the exact preservation of first integrals of  arbitrary forms, then \textit{conservative methods} would need to be utilized, i.e.\ methods that numerically preserved conserved quantities exactly up to machine precision. What sets conservative methods apart from other geometric integrators is that they may possess long-term stability over arbitrarily long time periods~\cite{wan16b}. 

Conservative numerical schemes have also been extensively investigated in the literature. In~\cite{quis08a}, the \textit{average vector field method} was proposed which allows the exact preservation of the energy of arbitrary form in a Hamiltonian system. A generalization of this idea to higher--order schemes using collocation is found in~\cite{hair10a}, with applications to energy preserving schemes for Poisson systems discussed in~\cite{cohe11a}. A further class of general conservative schemes is the \textit{discrete gradient method}, originally proposed in~\cite{quis96a}. This method relies on expressing a first--order system of ODEs in a skew-symmetric gradient form. While some dynamical systems, in particular Hamiltonian systems and so-called Nambu systems~\cite{namb73Ay}, admit natural skew-symmetric gradient representations, many other systems have to be first brought into a skew-gradient form before the discrete gradient method can be applied. Moreover, one main drawback when applying the discrete gradient method to large dimensional systems with multiple invariants is that the order of the associated skew-symmetric tensor increases with the number of conserved quantities.

One further class of exactly conservative methods is given by \textit{projection methods}~\cite{hair06Ay}. Here one applies a standard (usually explicit) integrator over one or more time steps and subsequently projects the resulting numerical approximation onto the manifold spanned by the conserved quantities. As discussed in~\cite{wan16b}, the projection step can become problematic if the manifold spanned by the invariants consists of several connected components, since the projection may then bring the numerical solution onto the wrong connected component.

In~\cite{wan16a}, we have introduced the Discrete Multiplier Method (DMM), which is a general purpose method for finding conservative schemes for dynamical systems with arbitrary forms of conserved quantities. The proposed method rests on discretizing the characteristic~\cite{olve86Ay}, also called conservation law multiplier~\cite{blum10Ay}, of conservation laws so that the discrete conserved quantities hold. This idea was originally proposed in~\cite{wan13Ay} for both PDE and ODE systems, with a systematic framework for constructing conservative finite difference schemes for general ODE systems derived in~\cite{wan16a}. There it was also shown that the average vector field method corresponds to a special choice of the discretization of the conservation law multiplier of Hamiltonian systems. Several examples of conservative schemes for classical dynamical systems were presented in~\cite{wan16a}, but many--body systems were not considered there. As several important dynamical systems are in fact many--body problems, the purpose of this paper is to demonstrate that the constructive framework proposed in~\cite{wan16a} is also suitable for large dynamical systems.

We note here that for the purpose of the present paper, `many--body systems' refers to dynamical systems with at most a few thousand degrees of freedom. While the DMM for finding conservative integrators is not dependent on the number of degrees of freedom of the underlying dynamical system, the resulting conservative schemes are typically implicit. As such, a practical implementation of these schemes generally relies on an (efficient) implicit solver which renders the case of very many bodies (i.e.\ millions and more) computationally challenging. We do not aim to address this computational challenge in the present work, where we exclusively use a standard fixed point iteration for solving these implicit conservative schemes, and reserve a more in-depth study of this computational problem for future work.

The further organization of this paper is as follows. In Section~\ref{sec:TheoryConservativeIntegrators}, we give a brief review of DMM for conservative discretizations as originally proposed in~\cite{wan13Ay}. We then propose several second--order conservative schemes derived using DMM for many--body systems in the following sections. Specifically, Section~\ref{sec:ConservativeIntegratorsLV} is devoted to a conservative schemes for the general $n$-species Lotka--Volterra system of population dynamics. In Section~\ref{sec:ConservativeIntegratorsGeneralNBody}, we present a conservative scheme for the $n$-body problem with general radially symmetric potential and recover Greenspan--Labudde's conservative scheme~\cite{LG75,gree04a}. The celestial $n$-body problem and the Lennard--Jones potential from molecular dynamics are considered as special cases. Section~\ref{sec:ConservativeIntegratorsPointVortex} details conservative schemes for the $n$-point vortex model on the plane and on the sphere. Section~\ref{sec:NumericsManyBody} features numerical results of the various schemes derived in this paper. Finally, we make some concluding remarks in Section~\ref{sec:ConclusionsManyBody}. \ref{sec:Appendix} includes theoretical verifications of conservative properties of all the schemes presented and shows that they are all second--order accurate.

\section{Construction of exactly conservative integrators}\label{sec:TheoryConservativeIntegrators} 

Before discussing the theory of DMM presented in \cite{wan16a}, we first fix some notations which will be used throughout this article. 

\subsection{Notations and conventions}
Let $U\subset \mathbb{R}^n$ and $V\subset \mathbb{R}^m$ be open subsets where here and in the following $n,m,p \in \mathbb{N}$. $f\in C^p(U\rightarrow V)$ means $f$ is a $p$-times continuously differentiable function with domain in $U$ and range in $V$. We often use boldface to indicate a vector quantity $\bb f$.  If $\bb f\in C^1(U\rightarrow V)$, $\partial_{\bb x} \bb f:=\left[\frac{\partial f_i}{\partial x_j}\right]$ denotes the Jacobian matrix. Let $I\subset \mathbb{R}$ be an open interval and let $\bb x\in C^1(I\rightarrow U)$, $\dot{\bb x}$ denotes the derivative with respect to time $t\in I$. Also if $\bb x\in C^p(I\rightarrow U)$, $\bb x^{(q)}$ denotes the $q$-th time derivative of $\bb x$ for $1\leq q\leq p$.  For brevity, the explicit dependence of $\bb x$ on $t$ is often omitted with the understanding that $\bb x$ is to be evaluated at $t$. If $\bb \psi\in C^1(I\times U\rightarrow V)$, $D_t \bb\psi$ denotes the total derivative with respect to $t$, and $\partial_t \bb\psi$ denotes the partial derivative with respect to $t$. $M_{m\times n}(\mathbb{R})$ denotes the set of all $m\times n$ matrices with real entries. 

\subsection{Conserved quantities of quasilinear first--order ODEs}
Consider a quasilinear first--order system of ODEs,
\begin{align}
\dot{\bb x}(t) &= \bb f(t,\bb x), \label{odeEqn} \\
\bb x(t_0)&=\bb x_0. \nonumber
\end{align} where $t\in I$, $\bb x=(x_1(t),\dots, x_n(t)) \in U$. 
For $1\leq p\in \mathbb{N}$, if $\bb f \in C^{p-1}(I\times U\rightarrow\mathbb{R}^n)$ and is Lipschitz continuous in $U$, then standard ODE theory implies there exists an unique solution $\bb x\in C^p(I\rightarrow U)$ to the first--order system \eqref{odeEqn} in a neighborhood of $(t_0,\bb x_0)\in I\times U$.

\begin{mydef} Let $m\in\mathbb{N}$ with $1\leq m\leq n$. A vector-valued function $\bb \psi \in C^1(I\times U\rightarrow \mathbb{R}^m)$ is a vector of conserved quantities\footnote{By quasilinearity of \eqref{odeEqn}, it suffices to consider conserved quantities depending only on $t,\bb x$, see \cite{wan16a}.} (or equivalently first integrals) if 
\begin{align}
D_t \bb \psi(t,\bb x) = \bb 0, \text{ for any $t\in I$ and $C^1(I\rightarrow U)$ solution } \bb x \text{ of }\eqref{odeEqn}. \label{consQ}
\end{align} In other words, $\bb \psi(t,\bb x)$ is constant on any $C^1(I\rightarrow U)$ solution $\bb x$ of \eqref{odeEqn}.
\end{mydef}

A generalization of integrating factors is known as \emph{characteristics} by \cite{olve86Ay} or equivalently, \emph{conservation law multipliers} by \cite{blum10Ay}. We will adopt the terminology of conversation law multiplier or just multiplier when the context is clear. 
\begin{mydef}
Let $m\in\mathbb{N}$ with $1\leq m\leq n$ and $U^{(1)}$ be an open subset of $\mathbb{R}^n$. A conservation law multiplier of $\bb F$ is a matrix-valued function $\Lambda \in C(I\times U \times U^{(1)}\rightarrow M_{m\times n}(\mathbb{R}))$ such that there exists a function $\bb \psi \in C^1(I\times U\rightarrow \mathbb{R})$ satisfying,
\begin{align}
\Lambda(t,\bb x, \dot{\bb x})(\dot{\bb x}(t)-\bb f(t,\bb x)) = D_t \bb \psi(t,\bb x), \text{ for $t\in I$, }\bb x \in C^1(I\rightarrow U). \label{multEqn}
\end{align}
\end{mydef}
Here, we emphasize that condition \eqref{multEqn} is satisfied as an identity for arbitrary $C^1$ functions $\bb x$; in particular $\bb x$ need not be a solution of \eqref{odeEqn}. It follows from the definition of conservation law multiplier that existence of multipliers implies existence of conservation laws. Conversely, given a known vector of conserved quantities $\bb \psi$, there can be many conservation law multipliers which correspond to $\bb \psi$. It was shown in \cite{wan16a} that it suffices to consider multipliers of the form $\Lambda(t,\bb x)$ where a one-to-one correspondence exists between conservation law multipliers and conserved quantities of \eqref{odeEqn}. 
\begin{thm}[Theorem 4 of \cite{wan16a}]
Let $\bb \psi\in C^1(I\times U\rightarrow \mathbb{R}^m)$. Then there exists a unique conservation law multiplier of \eqref{odeEqn} of the form $\Lambda \in C(I\times U\rightarrow M_{m\times n}(\mathbb{R}))$ associated with the function $\bb \psi$ if and only if $\bb \psi$ is a conserved quantity of \eqref{odeEqn}. And if so, $\Lambda$ is unique and satisfies for any $t\in I$ and $ \bb x\in C^1(I\rightarrow U)$,
\begin{subequations}
\begin{align}
\Lambda(t,\bb x) = \partial_{\bb x} \bb \psi(t,\bb x), \label{multCond1} \\
\Lambda(t,\bb x) \bb f(t,\bb x) = -\partial_t \bb \psi(t,\bb x). \label{multCond2}
\end{align}
\end{subequations}
\end{thm}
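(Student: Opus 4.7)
The plan is to separate the multiplier identity \eqref{multEqn} into its $\dot{\bb x}$--linear and $\dot{\bb x}$--independent components so that, on one hand, $\Lambda$ is pinned down algebraically at every point, and on the other hand the on--solutions conservation condition \eqref{consQ} becomes equivalent to the pointwise identity $\Lambda \bb f = -\partial_t \bb\psi$. Both directions of the ``if and only if'' together with uniqueness will drop out of this split.

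For the forward direction, assume $\Lambda \in C(I\times U \to M_{m\times n}(\mathbb{R}))$ satisfies \eqref{multEqn}. Expanding $D_t \bb\psi$ by the chain rule rewrites that identity as
\begin{equation*}
\Lambda(t,\bb x)\dot{\bb x} - \Lambda(t,\bb x)\bb f(t,\bb x) \;=\; \partial_t \bb\psi(t,\bb x) + \partial_{\bb x} \bb\psi(t,\bb x)\,\dot{\bb x},
\end{equation*}
valid for every $C^1$ curve $\bb x$ and every $t \in I$. Fixing an arbitrary $(t_0,\bb x_0) \in I\times U$ and plugging in the linear test curves $\bb x(t)=\bb x_0+(t-t_0)\bb v$ makes both sides affine in $\bb v \in \mathbb{R}^n$. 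Matching the coefficient of $\bb v$ forces $\Lambda(t_0,\bb x_0) = \partial_{\bb x}\bb\psi(t_0,\bb x_0)$, which simultaneously proves uniqueness and \eqref{multCond1}, while matching the $\bb v$--independent term yields \eqref{multCond2}. Substituting any solution $\bb x$ of \eqref{odeEqn} back into \eqref{multEqn} now gives $D_t\bb\psi = \Lambda\cdot\bb 0 = \bb 0$, so $\bb\psi$ is conserved.

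For the reverse direction, assume $\bb\psi$ is a conserved quantity and set $\Lambda(t,\bb x) := \partial_{\bb x}\bb\psi(t,\bb x)$, which lies in $C(I\times U \to M_{m\times n}(\mathbb{R}))$ since $\bb\psi \in C^1$. The crux is to upgrade the on--solutions statement $D_t \bb\psi = \bb 0$ to the pointwise identity $\partial_t \bb\psi + \partial_{\bb x}\bb\psi\cdot\bb f = \bb 0$ holding on all of $I \times U$. This is where the ODE hypotheses on $\bb f$ enter: through any $(t_0,\bb x_0) \in I\times U$ there is a local $C^1$ solution of \eqref{odeEqn}, and evaluating the chain--rule identity $D_t \bb\psi = \partial_t\bb\psi + \partial_{\bb x}\bb\psi\,\dot{\bb x}$ at $t_0$ with $\dot{\bb x}(t_0)=\bb f(t_0,\bb x_0)$ produces the desired equation at $(t_0,\bb x_0)$. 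Substituting this identity back into the chain--rule expansion of $D_t\bb\psi(t,\bb x)$ for an arbitrary $C^1$ curve yields $\Lambda(t,\bb x)(\dot{\bb x}-\bb f(t,\bb x)) = D_t\bb\psi(t,\bb x)$, which is \eqref{multEqn}.

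The only subtle point is the twin ``separation of variables'' moves on which each direction hinges: in the forward direction the freedom to prescribe $\dot{\bb x}(t_0)$ independently of $\bb x(t_0)$ comes from the fact that \eqref{multEqn} must hold for \emph{all} $C^1$ curves, not only solutions, whereas in the reverse direction the analogous freedom is supplied by the existence of an ODE solution through every initial point. Once this observation is in hand, the remainder is just the chain rule, and the continuity of $\Lambda$ matches $\bb\psi \in C^1$ with no additional regularity work.
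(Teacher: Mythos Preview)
Your argument is correct and is the natural one: expand the multiplier identity via the chain rule, use the freedom in $\dot{\bb x}$ (test curves through arbitrary points with arbitrary velocities) to split off \eqref{multCond1} and \eqref{multCond2}, and in the converse direction use local existence of ODE solutions to pass from the on--solutions condition $D_t\bb\psi=\bb 0$ to the pointwise identity $\partial_t\bb\psi+\partial_{\bb x}\bb\psi\,\bb f=\bb 0$. Note, however, that the present paper does not supply its own proof of this statement: it is quoted as Theorem~4 of~\cite{wan16a} and used as background for the discrete theory, so there is no in--paper proof to compare against. Your write-up is in line with the standard argument one would expect in that reference.
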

\noindent To construct conservative methods for \eqref{odeEqn} with conserved quantities \eqref{consQ}, we shall discretize the time interval $I$ by a uniform time size $\tau\in\mathbb{R}$, i.e. $t^{k+1}= t^k+\tau$ for $k\in \mathbb{N}$, and focus on one--step conservative methods\footnote{Analogous results hold for variable time step sizes and multi-step methods, see \cite{wan16a} for more details.}. First, we recall some definitions from \cite{wan16a}.

\begin{mydef}
Let $W$ be a normed vector space, such as $\mathbb{R}^m$ with the Euclidean norm or $M_{m\times n}(\mathbb{R})$ with the operator norm. A function $g^\tau:I\times U\times U \rightarrow W$ is called a one--step function if $g^\tau$ depends only on $t^k\in I$ and the discrete approximations $\bb x^{k+1}, \bb x^{k} \in U$.
\end{mydef}
\begin{mydef}
A sufficiently smooth one--step function $g^\tau:I\times U\times U\rightarrow W$ is consistent to a sufficiently smooth $g: I\times U\times U^{(1)}\rightarrow W$ if for any $\bb x\in C^2(I\rightarrow U)$, there is a constant $C>0$ independent of $\tau$ so that $\norm{g(t^k, \bb x(t^k), \dot{\bb x}(t^k))-g^\tau(t^k, \bb x(t^{k+1}), \bb x(t^k))}_W\leq C\norm{\bb x}_{C^2([t^{k}, t^{k+1}])}\tau,
$ where $\norm{\bb x}_{C^2([t^{k}, t^{k+1}])} := \displaystyle \max_{0\leq i\leq 2} \norm{\bb x^{(i)}}_{L^\infty([t^{k}, t^{k+1}])}$. If so, we write $g^\tau = g + \mathcal{O}(\tau)$.
\end{mydef}

We shall be considering the following consistent one--step functions for $\dot{\bb x}, D_t \bb \psi, \partial_t \bb \psi$:
\begin{align}
D_t^\tau \bb x(t^k, \bb x^{k+1}, \bb x^k)&:= \frac{\bb x^{k+1}-\bb x^k}{\tau}= \dot{x}+\mathcal{O}(\tau), \label{discFunc1} \\
D_t^\tau \bb \psi(t^k, \bb x^{k+1}, \bb x^k)&:= \frac{\bb \psi(t^{k+1}, \bb x^{k+1})-\bb \psi(t^{k}, \bb x^{k})}{\tau}= D_t\bb \psi+\mathcal{O}(\tau), \label{discFunc2} \\
\partial_t^\tau \bb \psi(t^k, \bb x^{k+1}, \bb x^k)&:= \frac{\bb \psi(t^{k+1}, \bb x^{k})-\bb \psi(t^{k}, \bb x^{k})}{\tau}= \partial_t \bb \psi+\mathcal{O}(\tau). \label{discFunc3} 
\end{align}
\begin{mydef}
Let $\bb f^\tau$ be a consistent one--step function to $\bb f$. We say that the one--step method,
\begin{equation}
D_t^\tau \bb x(t^k, \bb x^{k+1}, \bb x^k) = \bb f^\tau(t^k, \bb x^{k+1}, \bb x^k) \label{discEqn}
\end{equation} 
is conservative in $\bb \psi$, if $\bb \psi(t^{k+1}, \bb x^{k+1})=\bb \psi(t^k, \bb x^k)$ on any solution $\bb x^{k+1}$ of \eqref{discEqn} and $k\in \mathbb{N}$.
\end{mydef}
We now state two key conditions from \cite{wan16a} for constructing conservative one--step methods, which can be seen as a discrete analogue of \eqref{multCond1} and \eqref{multCond2}.

\begin{thm}[Theorem 17 of \cite{wan16a}] Let $D_t^\tau \bb x, D_t^\tau \bb \psi, \partial_t^\tau \bb \psi$ be as defined in \eqref{discFunc1}--\eqref{discFunc3}, and let $\Lambda$ be the conservation law multiplier of \eqref{odeEqn} associated with a conserved quantity $\bb \psi$. If $\bb f^\tau$ and $\Lambda^\tau$ are consistent one--step functions to $\bb f, \Lambda$ satisfying
\begin{subequations}
\begin{align}
\Lambda^\tau D^\tau_t \bb x &= D^\tau_t \bb \psi - \partial^\tau_t \bb \psi, \label{discMultCond1}\\
\Lambda^\tau \bb f^\tau &= -\partial^\tau_t \bb \psi, \label{discMultCond2}
\end{align}
\end{subequations}
then the one--step method defined by \eqref{discEqn} is conservative in $\bb \psi$. \label{thm:main}
\end{thm}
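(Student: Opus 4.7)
The plan is to mirror, at the discrete level, the continuous-time reasoning that leads from \eqref{multCond1}--\eqref{multCond2} to the preservation identity $D_t\bb\psi = 0$. In the continuous case one simply observes that
\[
\Lambda(t,\bb x)\bigl(\dot{\bb x} - \bb f(t,\bb x)\bigr) = \Lambda\dot{\bb x} - \Lambda\bb f = \partial_{\bb x}\bb\psi\,\dot{\bb x} + \partial_t\bb\psi = D_t\bb\psi,
\]
so that vanishing of $\dot{\bb x} - \bb f$ on solutions forces $D_t\bb\psi = 0$. The discrete assumptions \eqref{discMultCond1}--\eqref{discMultCond2} are designed precisely so that the same cancellation goes through on the mesh.

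Concretely, I would proceed in three short steps. First, subtract \eqref{discMultCond2} from \eqref{discMultCond1} to obtain the discrete analogue of the identity above,
\[
\Lambda^\tau\bigl(D_t^\tau \bb x - \bb f^\tau\bigr) = \bigl(D_t^\tau\bb\psi - \partial_t^\tau\bb\psi\bigr) - \bigl(-\partial_t^\tau\bb\psi\bigr) = D_t^\tau\bb\psi,
\]
as an identity in $(t^k, \bb x^{k+1}, \bb x^k)$, holding for arbitrary arguments and not just solutions of \eqref{discEqn}. Second, specialize to any solution $\bb x^{k+1}$ of the one--step method \eqref{discEqn}: by definition $D_t^\tau\bb x(t^k,\bb x^{k+1},\bb x^k) = \bb f^\tau(t^k,\bb x^{k+1},\bb x^k)$, so the left-hand side vanishes and $D_t^\tau\bb\psi(t^k,\bb x^{k+1},\bb x^k) = \bb 0$. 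Third, unpack the definition \eqref{discFunc2} of $D_t^\tau\bb\psi$: multiplying by $\tau\neq 0$ gives $\bb\psi(t^{k+1},\bb x^{k+1}) - \bb\psi(t^k,\bb x^k) = \bb 0$, which is exactly the conservative property in the sense of the preceding definition. Iterating over $k\in\mathbb{N}$ yields preservation of $\bb\psi$ along the entire discrete trajectory.

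There is really no hard step here: the content of the theorem lies in how \eqref{discMultCond1} and \eqref{discMultCond2} are set up, and the verification reduces to one algebraic subtraction followed by invoking the definition of the scheme. The only place that requires a moment's care is to notice that \eqref{discMultCond1} is posited as an identity in the arguments $(t^k,\bb x^{k+1},\bb x^k)$ rather than only on discrete solutions; without that, the subtraction in step one would not be legitimate. Once that is observed, the argument is a two-line calculation and no appeal to smoothness, consistency orders, or solvability of the implicit system is needed for the conservation statement itself.
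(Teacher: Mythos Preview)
Your argument is correct and is precisely the natural two-line verification: subtract \eqref{discMultCond2} from \eqref{discMultCond1}, then evaluate on a solution of \eqref{discEqn}. Note, however, that the paper does not supply its own proof of this theorem---it is quoted from~\cite{wan16a}---so there is no in-paper argument to compare against; your proof is the standard one and matches what one would expect from the cited reference.
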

In \cite{wan16a}, condition \eqref{discMultCond1} was solved by the use of divided difference calculus and \eqref{discMultCond2} was solved using a local matrix inversion formula. In the following many--body problems, we shall directly verify \eqref{discMultCond1} and \eqref{discMultCond2} for specific choices of $\bb f^\tau$ and $\Lambda^\tau$.

Lastly, we recall a well-known result for even order of accuracy of symmetric schemes. For more details, see Chapter II.3 of \cite{hair06Ay}.

\begin{mydef}[Symmetric Schemes~\cite{hair06Ay}]
Let $\Phi^\tau$ be the discrete flow of a one--step numerical method for system~\eqref{odeEqn} with time step $\tau$. The associated adjoint method $(\Phi^\tau)^*$ of the one--step method $\Phi^\tau$ is the inverse of the original method with reversed time step $-\tau$, i.e.\ $(\Phi^\tau)^* = (\Phi^{-\tau})^{-1}$. A method is symmetric if $(\Phi^\tau)^*=\Phi^\tau$.
\label{def:symmetricMethod}
\end{mydef}

\begin{thm}[Theorem II-3.2 of \cite{hair06Ay}]
A symmetric method is of even order.
\label{thm:evenOrder}
\end{thm}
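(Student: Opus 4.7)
The plan is to track how the leading--order local truncation error transforms under the adjoint operation $\Phi^\tau \mapsto (\Phi^\tau)^*$ and then impose the symmetry hypothesis. Let $\varphi^\tau$ denote the exact flow of \eqref{odeEqn}, and note first that $\varphi^\tau$ is itself symmetric in the sense of Definition~\ref{def:symmetricMethod}, since $\varphi^{-\tau} = (\varphi^\tau)^{-1}$. Suppose, for contradiction, that $\Phi^\tau$ is a symmetric one--step method whose exact order is an odd integer $p$, so that the local error admits an expansion
\begin{equation*}
\Phi^\tau(x) = \varphi^\tau(x) + \tau^{p+1}\, e(x) + \mathcal{O}(\tau^{p+2}),
\end{equation*}
with principal error function $e$ not identically zero on the domain.

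Next I would compute the analogous expansion for $(\Phi^\tau)^* = (\Phi^{-\tau})^{-1}$. Replacing $\tau$ by $-\tau$ in the expansion above yields
\begin{equation*}
\Phi^{-\tau}(y) = \varphi^{-\tau}(y) + (-1)^{p+1}\tau^{p+1}\, e(y) + \mathcal{O}(\tau^{p+2}).
\end{equation*}
Setting $y = (\Phi^\tau)^*(x)$ and writing $y = \varphi^\tau(x) + \delta$ with $\delta = \mathcal{O}(\tau^{p+1})$, the defining relation $\Phi^{-\tau}(y) = x$, combined with $\varphi^{-\tau}(\varphi^\tau(x)) = x$ and a first--order Taylor expansion of $\varphi^{-\tau}$ around $\varphi^\tau(x)$ using $D\varphi^{-\tau}(\varphi^\tau(x)) = I + \mathcal{O}(\tau)$, leads after cancellation to
\begin{equation*}
(\Phi^\tau)^*(x) = \varphi^\tau(x) + (-1)^{p}\tau^{p+1}\, e(x) + \mathcal{O}(\tau^{p+2}).
\end{equation*}

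Invoking the symmetry hypothesis $\Phi^\tau = (\Phi^\tau)^*$ and matching the $\tau^{p+1}$ coefficients gives $\bigl(1 - (-1)^p\bigr) e(x) \equiv 0$. For $p$ odd this forces $e \equiv 0$, contradicting the assumption that $p$ is exactly the order of the scheme; hence the order must be even. The only delicate point is the inversion step producing the sign flip $(-1)^{p+1} \mapsto (-1)^p$ in the leading error coefficient: one must check that all remainder contributions arising from $D\varphi^{-\tau}(\varphi^\tau(x))\cdot \delta - \delta$, from $e(y) - e(\varphi^\tau(x))$, and from $e(\varphi^\tau(x)) - e(x)$ are absorbed into $\mathcal{O}(\tau^{p+2})$. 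This bookkeeping is routine provided $\bb f$, and hence $\varphi^\tau$ and $e$, are sufficiently smooth, which is the standing assumption for this order--of--accuracy analysis.
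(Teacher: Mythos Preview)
The paper does not supply its own proof of this statement; Theorem~\ref{thm:evenOrder} is quoted verbatim as Theorem~II.3.2 of~\cite{hair06Ay} and used as a black box. Your argument is correct and is essentially the standard proof given in that reference: expand the local error of $\Phi^\tau$, show that passing to the adjoint flips the sign of the leading coefficient to $(-1)^p e(x)$, and conclude from symmetry that $e\equiv 0$ whenever $p$ is odd. The bookkeeping you flag (absorbing the $D\varphi^{-\tau}$ linearization error and the shifts $e(y)-e(\varphi^\tau(x))$, $e(\varphi^\tau(x))-e(x)$ into $\mathcal{O}(\tau^{p+2})$) is indeed routine under the smoothness assumptions in force, so there is no gap.
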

In order words, combining with Theorem \ref{thm:main}, the one--step conservative schemes are at least second--order accurate if they are symmetric.

\section{Examples of DMM for many--body systems}

In the section, we review some examples of many--body systems from population dynamics, classical mechanics, molecular dynamics and fluid dynamics. Specifically, we present conservative schemes derived using DMM for the $n$-species Lotka--Volterra systems, the $n$-body problem involving the gravitational potential and the Lennard--Jones potential, and the $n$-point vortex problem on the plane and on the unit sphere. For brevity and clarity, we have included all the details of calculations for derivations and verifications in \ref{sec:Appendix}.

\subsection{Conservative schemes for Lotka--Volterra systems}\label{sec:ConservativeIntegratorsLV}

The $n$-species Lotka--Volterra equations describe a simplified dynamics among $n$ competing species interacting in an environment~\cite{hofb98a}. Specifically, we consider the $n$-species Lotka--Volterra system given in the form of,
\begin{align}
\boldsymbol F(\bb x, \dot{\bb x}):= \begin{bmatrix}\dot{x}_i - x_i \sum_{j=1}^n a_{ij}(x_j-\xi_j)\end{bmatrix}_{1\leq i\leq n} = {\bb 0}, \label{LVsys}
\end{align} where $\bb x = (x_1,\dots, x_n)^T$ is the population of each species with $x_i>0$, $A=[a_{ij}]$ is an $n\times n$ interaction matrix with real entries and $\bb \xi = (\xi_1,\dots, \xi_n)^T$ is a fixed point of the system. 
It is known from \cite{schi03a} that \eqref{LVsys} has the conserved quantity
\vskip -2mm
\begin{equation}\label{eq:ConservedQuantityLotkaVolterraSystem}
V(\bb x) := \sum_{i=1}^n d_i(\xi_i \log x_i-x_i),
\end{equation}
if there exists an $n\times n$ real diagonal matrix $D=\text{diag}(d_1,\dots, d_n)$ such that $DA$ is skew-symmetric. 

A conservative scheme for \eqref{LVsys} which preserves $V$ numerically was derived using DMM and is given by,
\begin{align}
\boxed{
\boldsymbol F^\tau(\bb x^{k+1}, \bb x^{k}) := \begin{bmatrix}\dfrac{x_i^{k+1}-x_i^k}{\tau} - x_i^\tau\sum\limits_{j=1}^n a_{ij}x_j^\tau\left(1- \dfrac{\xi_j}{x_j^k}g\left(\dfrac{x_j^{k+1}}{x_j^k}\right)\right) \end{bmatrix}_{1\leq i\leq n}=\bb 0,
}
\label{LVDisc}
\end{align}
where $g(z) := \dfrac{\log z}{z-1}$ and $x_i^\tau$ is any consistent discretization of $x_i$ (i.e. $x_i^\tau \rightarrow x_i$, as $\tau \rightarrow 0$). It is interesting to note that \eqref{LVDisc} is consistent to \eqref{LVsys} since $g(z) = 1 - \frac{1}{2}(z-1) + \mathcal{O}((z-1)^2) \rightarrow 1$ as $\dfrac{x_j^{k+1}}{x_j^k} \rightarrow 1$ with $\tau\rightarrow 0$. 

The simplest consistent choices of $x_i^\tau$ would be $x_i^\tau := x_i^k$ or $x_i^{k+1}$, which will lead to first--order conservative schemes. Thus, for improved accuracy with similar computational costs, we choose $x_i^\tau$ so that the resulting scheme \eqref{LVDisc} is symmetric, which will lead to a second--order scheme according to Theorem \ref{thm:evenOrder}. In particular, we show in \ref{app:LV} that \eqref{LVDisc} is conservative and is symmetric if $x_i^\tau$ itself is symmetric. Specifically, choosing $x_i^\tau = \overline{x_i} := \dfrac{1}{2}\left(x_i^k+x_i^{k+1}\right)$ leads to an ``Arithmetic mean DMM" scheme for \eqref{LVsys}. Since the phase variables $x_i$ for \eqref{LVsys} are nonnegative, another choice is $x_i^\tau := \sqrt{x_i^{k}x_i^{k+1}}$ or a ``Geometric mean DMM" scheme for \eqref{LVsys}. 

\subsection{Many--body problem with pairwise radial potentials}\label{sec:ConservativeIntegratorsGeneralNBody}

The many--body problem with pairwise radial potential, i.e.\ with conservative forces that only depend on the radial difference of each two point masses, is one of the most fundamental models in classical mechanics. It describes, in an idealized fashion, numerous physical phenomena, including the motion of planets in a solar system and atoms of molecules. 

We consider the many--body problem as the Hamiltonian system with $n$ particles in $\mathbb{R}^3$ with radial interaction potentials\footnote{There are in general $6n$ unknowns for \eqref{nBodySys}. In particular, the two body problem is integrable since there are 10 constants of motion with an additional two conserved quantities provided by the Laplace--Runge--Lenz vector.},
\begin{equation}
\bb F(\bb q, \bb p, \dot{\bb q}, \dot{\bb p}):=
\begin{pmatrix}
 \begin{bmatrix}\dot{\bb q}_i-\dfrac{\bb p_i}{m_i}\end{bmatrix}_{1\leq i\leq n}  \\
 \begin{bmatrix}\dot{\bb p}_i+\sum\limits_{j=1, j\neq i}^n \dfrac{\partial V_{ij}}{\partial q_{ij}}(q_{ij})\dfrac{\bb q_i-\bb q_j}{q_{ij}}\end{bmatrix}_{1\leq i\leq n}
 \end{pmatrix} = \bb 0, \label{nBodySys}
\end{equation} where $\bb q = (\bb q_1,\dots, \bb q_n)^T, \bb p = (\bb p_1,\dots, \bb p_n)^T$ with $\bb q_i \in \mathbb{R}^3, \bb p_i \in \mathbb{R}^3, m_i \in \mathbb{R}_+$ as the position, momenta and mass of the $i$-th particle. For each distinct pair $(i,j)$ of particles, $q_{ij}:= |\bb q_i-\bb q_j|= q_{ji}$ denotes their Euclidean distance and $V_{ij}$ is their radial pairwise potential energy such that $V_{ij}=V_{ji}$. From classical mechanics, it is well-known that there are ten constants of motion for \eqref{nBodySys}. Specifically, they are the Hamiltonian $H$, total linear momentum $\bb P$, total angular momentum $\bb L$ and initial center of mass $\bb C$ -- given by,
\begin{align}\label{eq:ConservedQuantitiesManyBodyProblem}
\begin{split}
H(\bb q, \bb p) &:= \sum_{i=1}^n \dfrac{\bb p_i^T \bb p_i}{2m_i}+\sum_{1\leq i<j\leq n} V_{ij}(|\bb q_i-\bb q_j|)\\
\bb P(\bb q, \bb p) &:= \sum_{i=1}^n \bb p_i \\
\end{split}
\begin{split}
\bb L(\bb q, \bb p) &:= \sum_{i=1}^n \bb q_i \times \bb p_i \\
\bb C(t,\bb q, \bb p) &:= \dfrac{1}{M}\left(\sum_{i=1}^n m_i \bb q_i\right)-\dfrac{\bb P}{M} t,%
\end{split}
\end{align}
where $M = \sum_{i=1}^n m_i$ is the total mass of the system. 

A conservative scheme for \eqref{nBodySys} which preserves all ten first integrals was derived using DMM and is given by,
\begin{align}
\boxed{
\bb F^\tau(\bb q^{k+1},\bb p^{k+1}, \bb q^k, \bb p^k) := \begin{pmatrix}
\begin{bmatrix}
\dfrac{\bb q^{k+1}-\bb q^k}{\tau}- \dfrac{\overline{\bb p_i}}{m_i}\end{bmatrix}_{1\leq i\leq n}\\
\begin{bmatrix}
\dfrac{\bb p^{k+1}-\bb p^k}{\tau}+\sum\limits_{j=1, j\neq i}^n \dfrac{\Delta V_{ij}}{\Delta q_{ij}}\dfrac{1}{\overline{q_{ij}}}(\overline{\bb q_i}-\overline{\bb q_j})\end{bmatrix}_{1\leq i\leq n}
\end{pmatrix} = \bb 0.
}\label{nBodyDisc}
\end{align}
We note that the discretization~\eqref{nBodyDisc} was previously reported in~\cite[Section 2.2]{gree04a}, although there no constructive derivation was given. It is thus the added benefit of DMM that conservative schemes can be constructed systematically as detailed in the Appendix.

Here, scalar or vector quantities with a line above denote the arithmetic mean of the quantity at $t_k$ and $t_{k+1}$. Similar to the previous Lotka--Volterra example, the reason for this symmetric choice is due to Theorem \ref{thm:evenOrder}. As shown in the Appendix, \eqref{nBodyDisc} is conservative and symmetric provided $\frac{\Delta V_{ij}}{\Delta q_{ij}}$ is symmetric. 

We now present some particular cases of the potential~$V$ that specify the discretizations~\eqref{nBodyDisc} for physically relevant problems in celestial mechanics and molecular dynamics.

\subsubsection{Gravitational Potential}

The classical Newtonian form of the many--body problem as applies to the solar system is given by the following specification of the pairwise radial potential,
\begin{align*}
&V_{ij}(q_{ij}) = -\dfrac{Gm_i m_j}{q_{ij}},
\end{align*}
where $G$ is Newton's gravitational constant. For the numerical scheme~\eqref{nBodyDisc}, the respective divided difference for the potential thus simplifies to
\[
&\boxed{\frac{\Delta V_{ij}}{\Delta q_{ij}} = \frac{Gm_i m_j}{q_{ij}^{k+1} q_{ij}^k}},
\]
which is symmetric under the permutation $k\leftrightarrow k+1$.
\subsubsection{Lennard--Jones Potential}

In classical molecular dynamics, forces are modeled to be attractive when particles are far from each other and repulsive when they are close. A classical example for a potential in molecular dynamics is the Lennard--Jones potential, given by
\begin{align*}
&V_{ij}(q_{ij}) = 4\epsilon\left(\dfrac{\sigma^{12}}{q_{ij}^{12}}-\dfrac{\sigma^{6}}{q_{ij}^{6}}\right),
\end{align*}
where $\epsilon$ and $\sigma$ are the potential well depth and the distance where the potential becomes zero, respectively. For this particular form of the potential, the divided difference for the potential in~\eqref{nBodyDisc} becomes
\begin{align}
&\boxed{\frac{\Delta V_{ij}}{\Delta q_{ij}} = 4\epsilon\left(-\frac{\sigma^{12}}{q_{ij}^{k+1}q_{ij}^k}\sum_{l=0}^{11} (q_{ij}^{k+1})^{l-11} (q_{ij}^k)^{-l}+\frac{\sigma^{6}}{q_{ij}^{k+1}q_{ij}^k}\sum_{l=0}^{5} (q_{ij}^{k+1})^{l-5} (q_{ij}^k)^{-l}\right)},
\label{eqn:LJ_dividedDiff}
\end{align} which is again symmetric under the permutation $k\leftrightarrow k+1$.

\subsection{Point vortex problem}\label{sec:ConservativeIntegratorsPointVortex}

The simplified modeling of the continuous description of fluid mechanics on the plane by an ensemble of point vortices dates back more than 150 years, and was first introduced by Helmholtz~\cite{helm58a}, with other important early contributions due to Kirchhoff who in particular first established the Hamiltonian representation of the equations of point vortex dynamics~\cite[Lecture 20]{kirc83a}, see also~\cite{Newton01,aref07a} for more detailed reviews.

We first consider the classical $n$-point vortex problem in the plane before discussing the $n$-point vortex problem on the unit sphere. While in the planar case the point vortex equations are a canonical Hamiltonian system, in the spherical case the equations constitute a non-canonical Hamiltonian system. From the point of view of geometric numerical integration, standard symplectic integrators can be used in the planar case, see results in~\cite{chan90a,pull91a,scov91a}, while the spherical case requires the use of Poisson integrators, with some recent examples found in~\cite{myer16a,vank14a}. Here, we show the DMM framework provides exactly conservative integrators for both cases.

\subsubsection{Planar case}

Consider the $n$-point vortex problem on the plane,
\begin{align}
\boldsymbol F(\bb x, \bb y, \dot{\bb x}, \dot{\bb y}) := 
\begin{pmatrix}
\begin{bmatrix}
\dot{x}_i+\dfrac{1}{2\pi}\sum\limits_{j=1, j\neq i}^n \Gamma_j \dfrac{y_{ij}}{ r_{ij}^2}
\end{bmatrix}_{1\leq i \leq n}\\
\begin{bmatrix}
\dot{y}_i-\dfrac{1}{2\pi}\sum\limits_{j=1, j\neq i}^n \Gamma_j \dfrac{x_{ij}}{ r_{ij}^2}
\end{bmatrix}_{1\leq i \leq n}
\end{pmatrix} =\boldsymbol 0, \label{pvPlane}
\end{align} where $\bb x=(x_1,\dots, x_n)^T$ and $\bb y=(y_1,\dots, y_n)^T$ with $(x_i,y_i)$ being the position of the $i$-th point vortex on the plane, and $\Gamma_i$ is the vorticity strength of the $i$-th vortex. We abbreviate $x_{ij}:=x_i-x_j$ and $y_{ij}:=y_i-y_j$ and $r_{ij}=\sqrt{x_{ij}^2+y_{ij}^2}$. It is well-known that \eqref{pvPlane} possesses four conserved quantities -- linear momentum $\bb P$, angular momentum $L$ and the Hamiltonian $H$ -- given by,
\begin{align}\label{eq:ConservedQuantitiesPlanarPointVortex}
\begin{split}
\bb P(\bb x, \bb y) &:= \begin{pmatrix} 
					\sum\limits_{i=1}^n \Gamma_i x_i \\
					\sum\limits_{i=1}^n \Gamma_i y_i 
				  \end{pmatrix},
\end{split}
\begin{split}
L(\bb x, \bb y) &:= \sum\limits_{i=1}^n \Gamma_i (x_i^2+y_i^2), \\
H(\bb x, \bb y) &:= -\dfrac{1}{2\pi}\sum\limits_{1\leq i< j\leq n} \Gamma_i \Gamma_j \log  r_{ij}.
\end{split}
\end{align}

A conservative scheme that preserves these four conserved quantities was found using DMM and is given by,
\begin{align}\boxed{
\boldsymbol F^\tau(\bb x^{k+1}, \bb y^{k+1}, \bb x^{k}, \bb y^{k}) := \begin{pmatrix} \begin{bmatrix}
\dfrac{x_i^{k+1}-x_i^{k}}{\tau} + \dfrac{1}{2\pi}\sum\limits_{j=1, j\neq i}^n 
\dfrac{\Gamma_j \overline{y_{ij}}}{( r_{ij}^k)^2}g\left(\left(\frac{r_{ij}^{k+1}}{r_{ij}^{k}}\right)^2\right) \end{bmatrix}_{1\leq i\leq n} \\ \begin{bmatrix}
\dfrac{y_i^{k+1}-y_i^{k}}{\tau}- \dfrac{1}{2\pi}\sum\limits_{j=1, j\neq i}^n  \dfrac{\Gamma_j \overline{x_{ij}}}{( r_{ij}^k)^2}g\left(\left(\frac{r_{ij}^{k+1}}{r_{ij}^{k}}\right)^2\right) \end{bmatrix}_{1\leq i\leq n}
\end{pmatrix} = \boldsymbol 0, \label{pvPlaneDisc}}
\end{align} where $g(z)=\dfrac{\log z}{z-1}$ as in the Lotka--Volterra example. We show in the Appendix that \eqref{pvPlaneDisc} is conservative and symmetric. 

\subsubsection{Spherical case}

The $n$-point vortex problem on the unit sphere is governed by the equations
\begin{align}
\boldsymbol F(\bb x, \dot{\bb x}) := 
\dot{\bb x}_i-\dfrac{1}{4\pi}\sum\limits_{j=1, j\neq i}^n \Gamma_j \dfrac{{\bb x}_j\times {\bb x}_i}{1-{\bb x}_i\cdot {\bb x_j}}
 =\boldsymbol 0, \label{pvSphere}
\end{align} where $\bb x=({\bb x}_1,\dots, {\bb x}_n)^T$ with ${\bb x}_i$ being the position of the $i$-th point vortex on the sphere and $\Gamma_i$ being the vortex strength of the $i$-th vortex. The point vortex equations on the unit sphere~\eqref{pvSphere} possess four conserved quantities, given by the Noether momentum $\bb P$ and the Hamiltonian $H$, which are
\begin{align}\label{eq:ConservedQuantitiesSphericalPointVortex}
\begin{split}
\bb P(\bb x) &:= \sum\limits_{i=1}^n \Gamma_i {\bb x}_i,
\end{split}
\begin{split}
H(\bb x) &:= -\dfrac{1}{4\pi}\sum\limits_{1\leq i< j\leq n} \Gamma_i \Gamma_j \log  (2-2{\bb x}_i\cdot {\bb x}_j).
\end{split}
\end{align}

The conservative discretization for~\eqref{pvSphere} was found using DMM and is given by
\begin{align}\boxed{
\boldsymbol F^\tau(\bb x^{k+1}, \bb x^{k}) := \dfrac{{\bb x}_i^{k+1}-{\bb x}_i^k}{\tau}-\dfrac{1}{4\pi}\sum\limits_{1\leq j \leq n, j\neq i} 
\Gamma_j \dfrac{\overline{{\bb x}_j}\times\overline{{\bb x}_i}}{1-{{\bb x}_i^k}\cdot{{\bb x}_j^k}}g\left(\frac{1-{\bb x}_i^{k+1}\cdot{\bb x}_j^{k+1}}{1-{\bb x}_i^{k}\cdot{\bb x}_j^{k}}\right) = \boldsymbol 0, \label{pvSphereDisc}}
\end{align}
where $g(z)=\dfrac{\log z}{z-1}$, as in the planar case. In the Appendix, \eqref{pvSphereDisc} is also shown to be conservative and symmetric.

\section{Numerical results}\label{sec:NumericsManyBody}

In this section, we present numerical results for the conservative schemes derived using DMM. We compare numerically the conservative property and second--order accuracy with some classical and symplectic methods. Specifically, as the derived DMM schemes are second--order implicit schemes, we compare all examples with the implicit Midpoint method, which has similar computational cost and is a second--order symplectic method with favourable long-term properties \cite{hair06Ay}. Moreover, we also compare with some explicit methods when applicable, such as the standard fourth-order explicit Runge--Kutta method and the St\"{o}rmer--Verlet method, which is a second--order symplectic method.

In the following, all implicit methods were solved by a fixed point iteration using the right hand side of the respective schemes with an absolute tolerance of $10^{-14}$, unless otherwise stated. For a final time $T$, we have used a uniform time step of size $\tau$ with a total number of $N$ time steps. For a conserved quantity $\psi$, we denote the $\ell^\infty$ norm in discrete time of the error by
\[
\text{Error}[\psi(t,\bb x)]:= \max_{k=1,\dots, N} |\psi(t^k,\bb x^k)-\psi(0,\bb x^0)|.
\]

\subsection{Lotka--Volterra systems}

We begin with the Lotka--Volterra systems of Section \ref{sec:ConservativeIntegratorsLV}. For reference, we compare the arithmetic and geometric mean DMM schemes of \eqref{LVDisc} with the Midpoint method and the standard explicit fourth-order Runge--Kutta method. 

Specifically, we consider the three--species non-degenerate Lotka--Volterra system represented by the $3\times3$ interaction matrix
\[
A=\left(\begin{array}{ccc}
1 & 1 & 1\\
0 & 0 & -2\\
0 & 1 & 0
\end{array}\right),
\] and a fixed point of ${\bf \xi}=\left(\frac{1}{2},\frac{1}{2},\frac{1}{2}\right)^{\rm T}$. This system is non-degenerate since $\det\left(A\right)\neq0.$ Additionally,
since $DA+A^{\rm T}D=0$ for $D=\text{diag}\left(0,1,2\right)$, it possesses
\emph{one} conserved quantity $V\left(\bb x\right) = \frac{1}{2}\log y-y+2\left(\frac{1}{2}\log z-z\right)$, as defined in Section \ref{sec:ConservativeIntegratorsLV}. This is precisely the conserved quantity that DMM was constructed to preserve exactly.

We choose the initial conditions of ${\bb x_{0}}=\left(\frac{1}{10},\frac{1}{10},\frac{1}{10}\right)^{\rm T}$ with a final time of $T=50$ and $\tau=5\cdot10^{-2}$. For the implicit methods considered, we have applied one step of the forward Euler method as the initial guess for the fixed point iterations with an absolute tolerance of $10^{-15}$. 

The convergence plot in Figure~\ref{LV 3 Species--non degenerate--Convergence} confirms that all methods has indeed the expected accuracy orders. In particular, the Arithmetic mean and Geometric mean DMM are confirmed to be second order. Figure \ref{LV 3 Species--non degenerate--CQError} shows that the error in time of the conserved quantity $V\left(\bb x\right)$ for all four methods considered. We see the exact conservation (up to machine precision) for the two DMM schemes, in contrast to the Midpoint method and the fourth-order Runge--Kutta method. As expected, we only observe a slow growth of error in $V(\bb x)$ for the DMM schemes due to accumulation of round-off errors and the nonzero tolerance imposed by the fixed point iterations. Table \ref{LV-3species-non_degenerate} summarizes the $\ell^\infty$ error in $V(\bb x)$ for all four methods.

\begin{table}[!ht]
\centering
\begin{tabular}{|c|c|c|c|c|}
\hline 
 Method & Midpoint & RK4 & Arith. Mean DMM& Geo. Mean DMM \tabularnewline
\hline 
\hline 
$\text{Error}[V(\bb x)]$ & $1.76\cdot10^{-3}$ & $3.68\cdot10^{-6}$ & $1.78\cdot10^{-15}$ & $6.22\cdot10^{-15}$\tabularnewline
\hline 
\end{tabular}
\caption{Error in conserved quantity $V\left(\bb x\right)$ for the non-degenerate three--species Lotka--Volterra model.}
\label{LV-3species-non_degenerate}
\end{table}

\begin{figure}[!ht]
\centering
\begin{subfigure}{0.49\textwidth} 
  \centering
  \includegraphics[width=\linewidth]{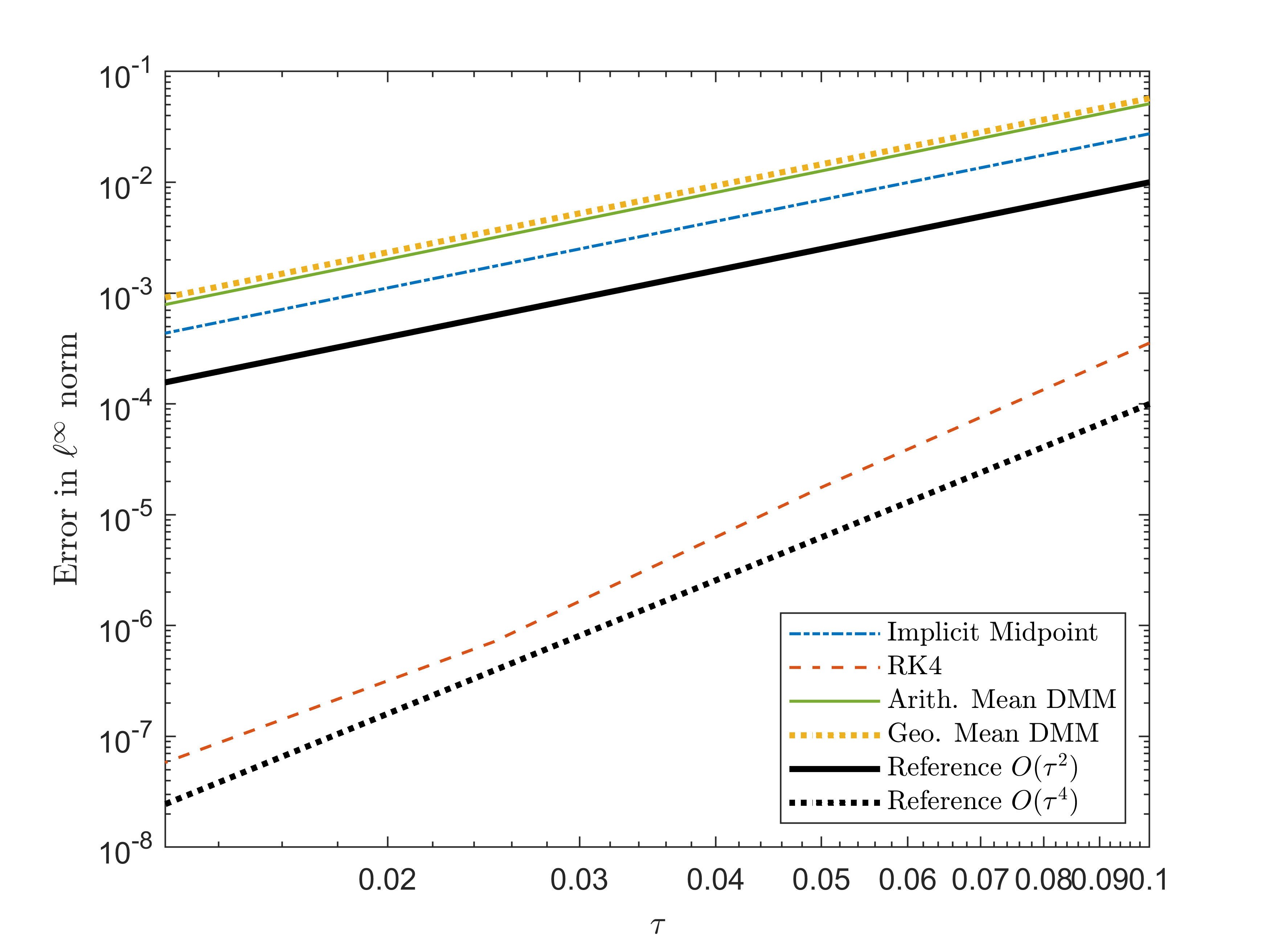}
  \caption{Convergence plot for all methods.}
  \label{LV 3 Species--non degenerate--Convergence}
\end{subfigure}
\hfil
\begin{subfigure}{0.49\textwidth}
  \centering
  \includegraphics[width=\linewidth]{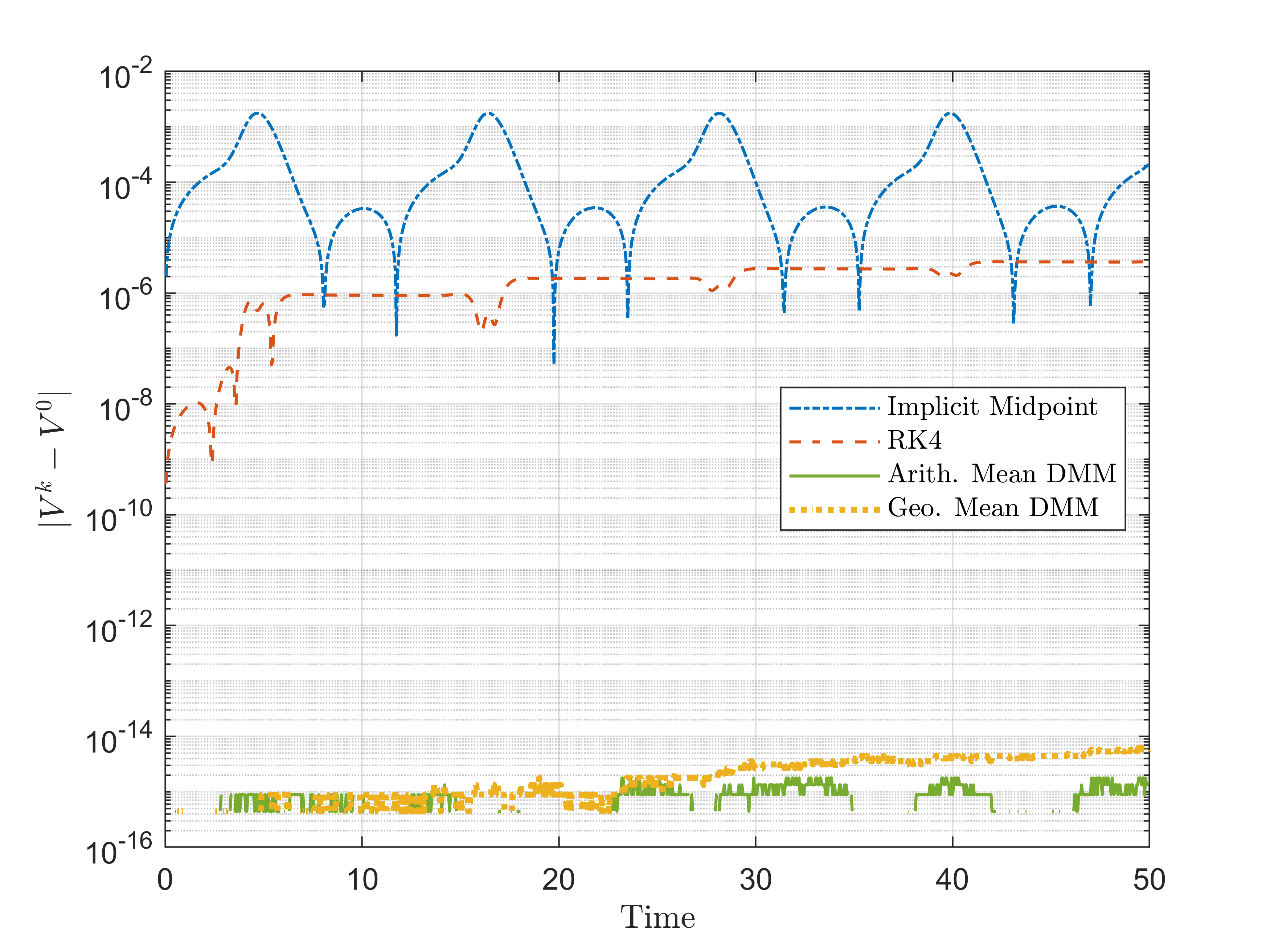}
  \caption{Error in conserved quantity $V$ versus time for all methods.}
  \label{LV 3 Species--non degenerate--CQError}
\end{subfigure}
\end{figure}

Figure \ref{LV 3 Species--non degenerate--trajectories} shows
the trajectories along the isosurface of $\mathcal{V}=\left\{ \bb x\in\mathbb{R}^{3}\left|V\left(\bb x\right)=V\left(\bb x_{0}\right)\right.\right\}.$ Qualitatively, we see that throughout a transient phase, all trajectories remain close to the isosurface $\mathcal{V}$. The dynamics then settles to a limit-cycle on the $y$--$z$ plane. While all trajectories are visually close to $\mathcal{V}$, only the trajectories computed by the DMM schemes are within machine-precision from $\mathcal{V}$. In contrast, trajectories of the Midpoint and fourth-order Runge--Kutta method are respectively $\approx 10^{-3}$ and $\approx 10^{-6}$ away from $\mathcal{V}$, even though it is difficult to discern visually from Figure~\ref{LV 3 Species--non degenerate--trajectories}.

\begin{figure}[!ht]
\centering
\includegraphics[width=28pc]{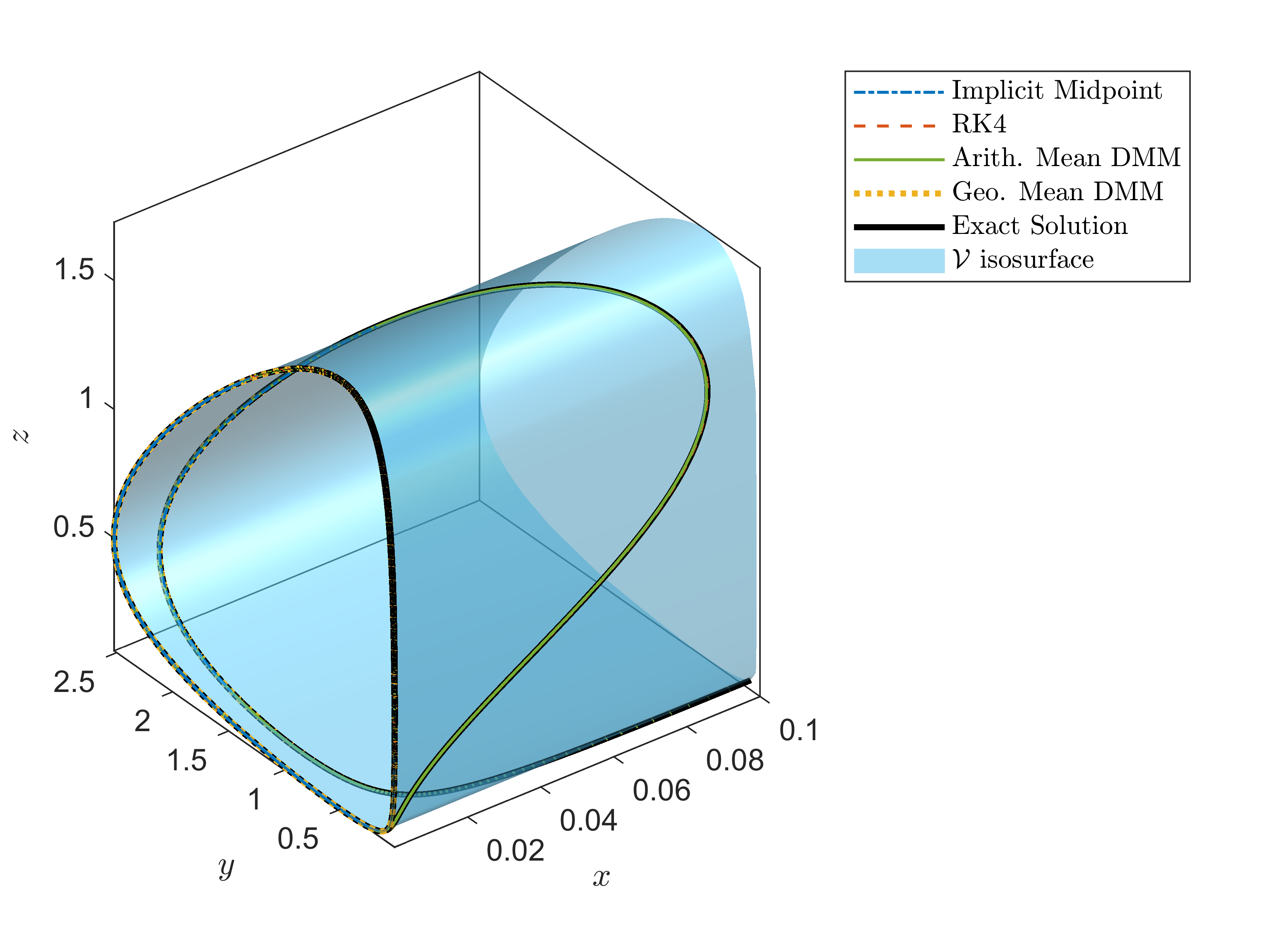}\caption{Trajectories of all methods for the three--species Lotka--Volterra system along $\mathcal{V}.$}
\label{LV 3 Species--non degenerate--trajectories}
\end{figure}

\subsection{Many--body problem: Gravitational potential}

Next we consider the standard ten--body solar system model. We compare the Greenspan--Labudde scheme, or equivalently the derived DMM scheme, with the St\"{o}rmer--Verlet and Midpoint schemes, which are two popular symplectic methods for many--body problems. The initial conditions were obtained from planetary data in \cite{FWBAE14}. We simulate the system to a final time $T=2\cdot10^{7}$ days ($\sim55,000$ years) using $N=4\cdot10^{6}$ time steps, corresponding to a uniform time step size of $\tau=5$ days. For the initial guess of the fixed point iterations, we have used a perturbation of the discrete solution from the previous time step in order to avoid singularities that can arise in evaluating divided differences.

Table~\ref{Table_CQ_Solar10} shows the error in $\ell^{\infty}$ norm of the ten conserved
quantities for the methods considered. While all three methods
perform quite similarly, the Greenspan--Labudde or DMM scheme is the only one which preserves the Hamiltonian up to machine precision. We also note that due to the time-dependent nature of the initial center of mass $\bb C(t,\bb q, \bb p)$, the error for this conserved quantity is not up to machine precision for the Greenspan--Labudde/DMM scheme. Specifically, since $\bb C$ is growing linearly in time, the discrete counterparts, which are zero up to machine precision, are being multiplied by a linear factor in time\footnote{For more details, see the calculations on the time-dependent terms of DMM in \eqref{eq:manybody_appendix_calc} of \ref{app:manybody}.}, resulting in $\sim10^{-11}$ error for $\bb C$.

\begin{table}[!ht]
\centering
\begin{tabular}{|c|c|c|c|}
\hline 
Method & Midpoint & St\"{o}rmer--Verlet & Greenspan--Labudde/DMM\tabularnewline
\hline 
\hline 
$\text{Error}[H(\bb q,\bb p)]$ & $7.03\cdot10^{-12}$ & $1.72\cdot10^{-12}$ & $2.03\cdot10^{-17}$\tabularnewline
\hline 
$\text{Error}[P_x(\bb p)]$ & $1.04\cdot10^{-18}$ & $1.44\cdot10^{-18}$ & $1.26\cdot10^{-18}$\tabularnewline
\hline 
$\text{Error}[P_y(\bb p)]$ & $8.84\cdot10^{-19}$ & $1.36\cdot10^{-18}$ & $9.44\cdot10^{-19}$\tabularnewline
\hline 
$\text{Error}[P_z(\bb p)]$ & $2.45\cdot10^{-19}$ & $9.41\cdot10^{-19}$ & $2.12\cdot10^{-19}$\tabularnewline
\hline 
$\text{Error}[L_x(\bb q,\bb p)]$ & $2.14\cdot10^{-17}$ & $4.15\cdot10^{-18}$ & $7.44\cdot10^{-18}$\tabularnewline
\hline 
$\text{Error}[L_y(\bb q,\bb p)]$ & $3.03\cdot10^{-16}$ & $4.94\cdot10^{-18}$ & $1.23\cdot10^{-16}$\tabularnewline
\hline 
$\text{Error}[L_z(\bb q,\bb p)]$ & $5.41\cdot10^{-16}$ & $1.36\cdot10^{-17}$ & $2.24\cdot10^{-16}$\tabularnewline
\hline 
$\text{Error}[C_x(t, \bb q,\bb p)]$ & $1.07\cdot10^{-11}$ & $1.24\cdot10^{-11}$ & $1.50\cdot10^{-11}$\tabularnewline
\hline 
$\text{Error}[C_y(t, \bb q,\bb p)]$ & $8.89\cdot10^{-12}$ & $1.75\cdot10^{-11}$ & $1.11\cdot10^{-11}$\tabularnewline
\hline 
$\text{Error}[C_z(t, \bb q,\bb p)]$ & $3.51\cdot10^{-12}$ & $1.06\cdot10^{-11}$ & $4.56\cdot10^{-12}$\tabularnewline
\hline 
\end{tabular}
\caption{\label{Table_CQ_Solar10} Error in conserved quantities for the ten--body solar
system model.}
\end{table}

Figure \ref{CQ_vs_time_All} shows the error in time for the ten conserved quantities of the three methods considered. Furthermore, Figure~\ref{Trajectories_Solar10} shows that the trajectories of all three methods are in good agreement qualitatively.
\newpage
\begin{figure}[ht!]
\centering
\includegraphics[width=40pc]{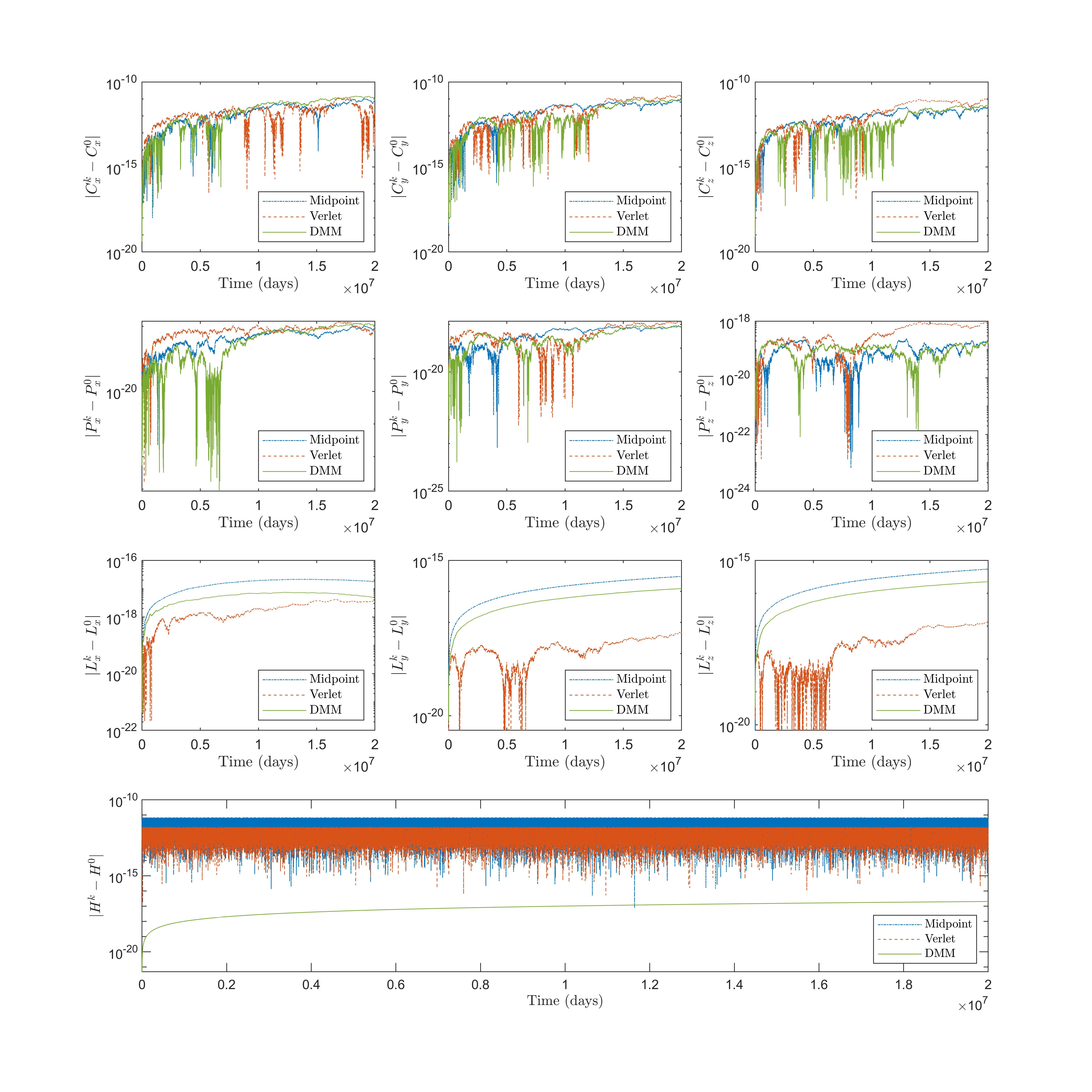}
\caption{\label{CQ_vs_time_All} Error in conserved quantities versus time for the ten--body solar system using the Midpoint method, St\"{o}rmer--Verlet method and Greenspan--Labudde/DMM scheme, plotted every 500 days.}
\end{figure}
\newpage
\begin{figure}[ht!]
\centering
\includegraphics[width=35pc]{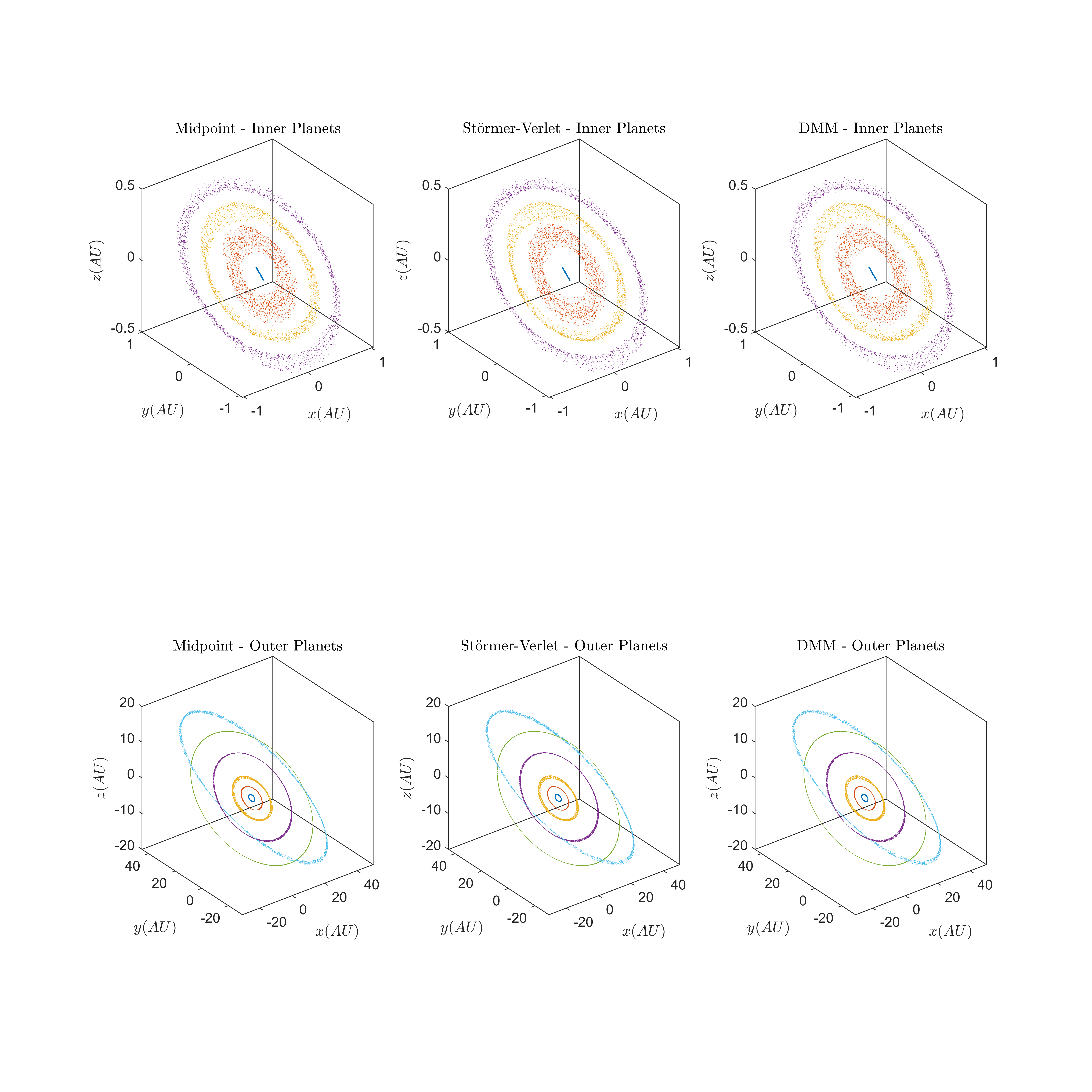}
\caption{\label{Trajectories_Solar10} Trajectories for all three methods, plotted every 2500 days.}
\end{figure}

\subsection{Many--body problem: Lennard--Jones Potential}

Next we consider an example from molecular dynamics using the Lennard--Jones Potential. For simplicity, we have chosen to look at a frozen Argon crystal model in 2D. This is to avoid complications which arises when restricting molecules to a finite domain, such as to truncate the potential to a finite radial length and to handle periodic boundary conditions in a conservative manner. We mentioned that \cite{SR21} recently resolved these difficulties in the case of general pairwise and three--body interaction potentials.

As with the gravitational potential example, we compare the derived DMM scheme, with the St\"{o}rmer--Verlet and the Midpoint method. For this problem, we used the initial conditions and parameters provided by \cite[Chapter I.4]{hair06Ay}. The units  are chosen to be nanoseconds $\left[ns\right]$ for
time, nanometers $\left[nm\right]$ for length, and $\left[kg\right]$
for mass. The number of atoms is set to $n=7$ and the mass of each
atom is uniformly set to $m_{i}=66.34\cdot10^{-27}\:\left[kg\right].$ We further set $\sigma=0.341\:\left[nm\right],$ and $\epsilon=119.8\,k_{B}~\left[J\right],$ where $k_{B}=1.380658\cdot10^{-23}\:\left[J\cdot K^{-1}\right]$ is
Boltzmann's constant. In the actual simulation, we have rescaled the equations
by $k_{B}$ in order to mitigate round-off errors. As a result, the conserved
quantities presented are in these rescaled units. We simulate the system to a final time $T=0.2~[ns]$ and $N=4\cdot10^{4}$ time steps, corresponding to a time step size of $\tau=50$ femtoseconds. Similar to the gravitational potential example, we used a perturbation of the discrete solution from the previous time step for the initial guess in the fixed point iterations.

Since the problem is two-dimensional, only six conserved quantities
are relevant and their respective $\ell^{\infty}$ norm errors are shown in Table \ref{Table_CQ_Argon}. Similarly to previous examples, we see that the derived DMM scheme is the only one preserving all conserved quantities. We note that there are larger accumulation of round-off errors for the Hamiltonian due to cancellation errors from the more complex divided difference expression \eqref{eqn:LJ_dividedDiff} arising from the Lennard--Jones potential. Furthermore, we observed good qualitative agreement of trajectories with other methods in Figure \ref{Trajectories_Argon} and favorable conservative properties of DMM in Figure \ref{CQ_vs_time_Argon}. 

\begin{table}[ht!]
\centering
\begin{tabular}{|c|c|c|c|}
\hline 
 & Midpoint & St\"{o}rmer-Verlet & DMM\tabularnewline
\hline 
\hline 
$\text{Error}[H(\bb q,\bb p)]$ & $3.17\cdot10^{-2}$ & $4.60\cdot10^{-2}$ & $7.84\cdot10^{-11}$\tabularnewline
\hline 
$\text{Error}[P_x(\bb q,\bb p)]$ & $9.66\cdot10^{-15}$ & $1.95\cdot10^{-14}$ & $8.16\cdot10^{-15}$\tabularnewline
\hline 
$\text{Error}[P_y(\bb q,\bb p)]$ & $5.50\cdot10^{-15}$ & $3.08\cdot10^{-14}$ & $5.47\cdot10^{-15}$\tabularnewline
\hline 
$\text{Error}[L_z(\bb q,\bb p)]$ & $4.41\cdot10^{-15}$ & $4.86\cdot10^{-15}$ & $3.36\cdot10^{-15}$\tabularnewline
\hline 
$\text{Error}[C_x(t,\bb q,\bb p)]$ & $3.53\cdot10^{-14}$ & $9.37\cdot10^{-14}$ & $2.10\cdot10^{-14}$\tabularnewline
\hline 
$\text{Error}[C_y(t,\bb q,\bb p)]$ & $3.41\cdot10^{-14}$ & $1.11\cdot10^{-13}$ & $2.67\cdot10^{-14}$\tabularnewline
\hline 
\end{tabular}
\caption{\label{Table_CQ_Argon} Error in conserved quantities for the frozen Argon crystal model.}
\end{table}

\begin{figure}[ht!]
\centering
\includegraphics[width=21pc]{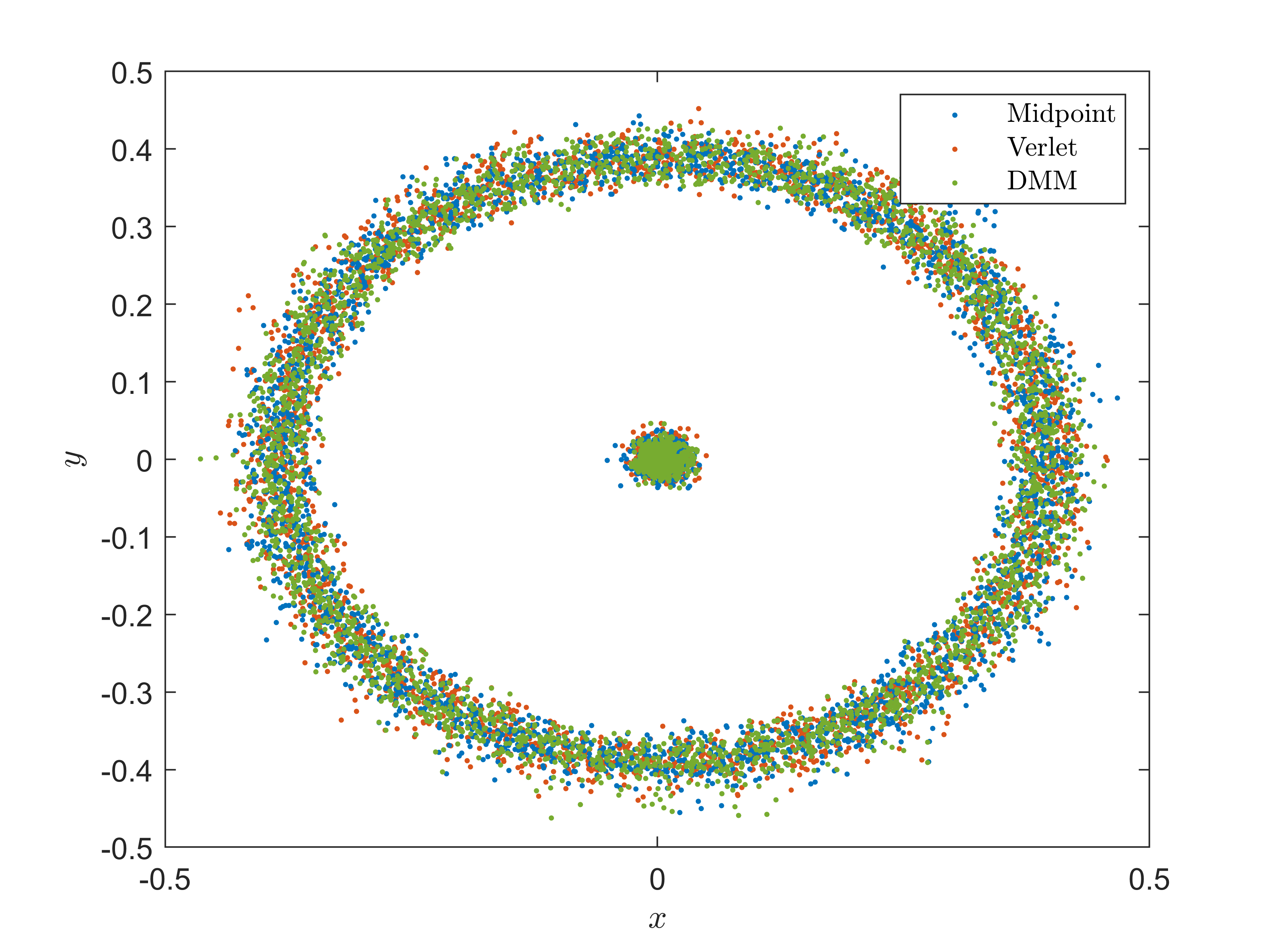}
\caption{\label{Trajectories_Argon}Trajectories of the seven Argon atoms for
all three methods.}
\end{figure}

\begin{figure}[ht!]
\centering
\includegraphics[width=40pc]{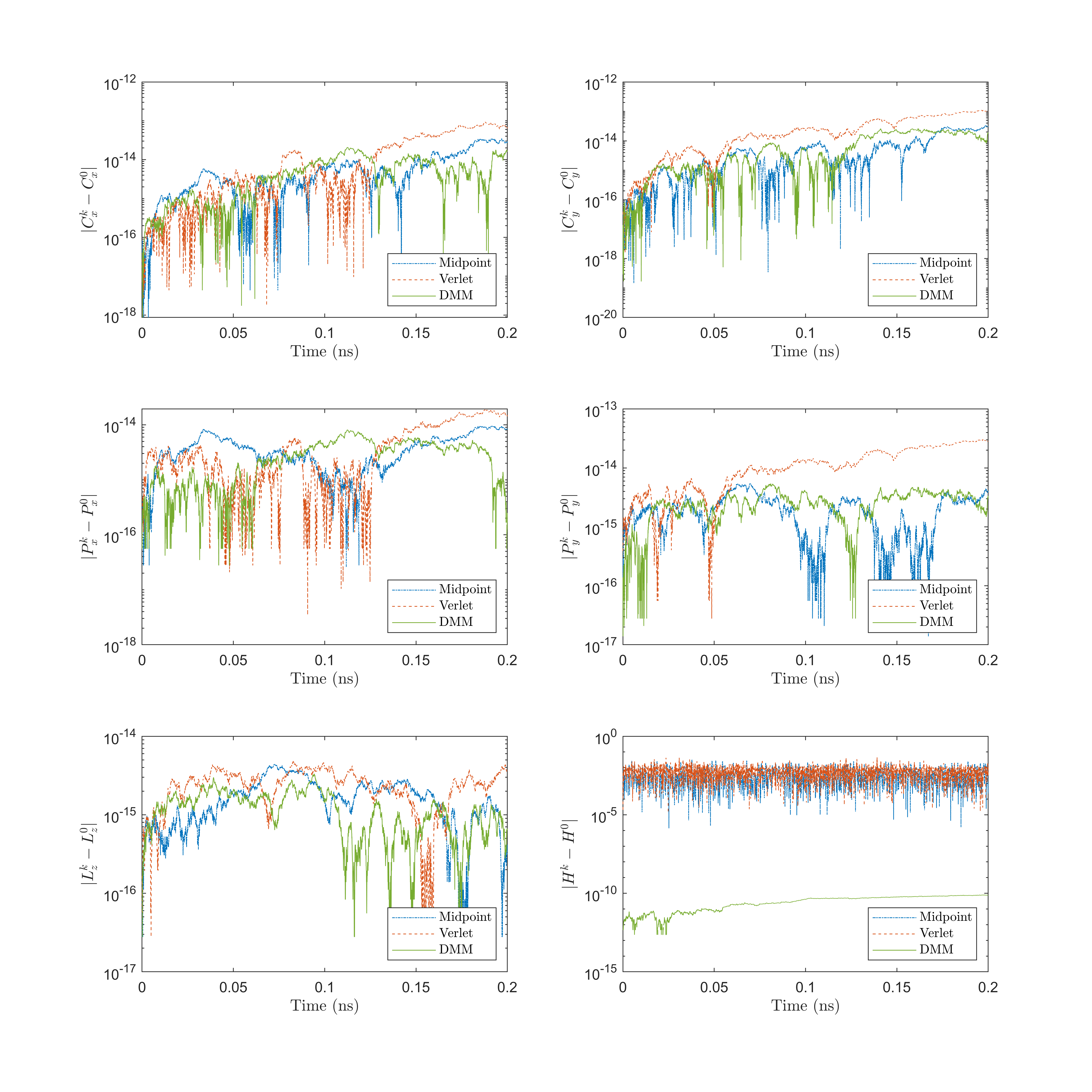}
\caption{\label{CQ_vs_time_Argon}Error in conserved quantities versus time for the frozen
Argon crystal model.}
\end{figure}

\subsection{Point vortex problem: Planar case}

Next, we investigate and compare numerical results of DMM with the Midpoint method and the standard explicit fourth-order Runge--Kutta method.

The test consists of evolving $n=1000$ randomly distributed vortices. The initial locations were sampled from a uniform distribution on $\left[-5,5\right]^{2}$ and post-processed to ensure that no two vortices were closer than a minimum distance of $10/n=10^{-2}$. Furthermore, the vorticity strengths $\Gamma_{i}$ were sampled uniformly from $[-1,1]/n$. We run the simulation to a final time of $T=100$ for a total of $N=1000$ time steps, corresponding to a time step size $\tau=10^{-1}$. As with the many--body examples, we have used a perturbation of the discrete solution from the previous time step for the initial guess of the fixed point iterations.

\begin{figure}[ht!]
\includegraphics[width=35pc]{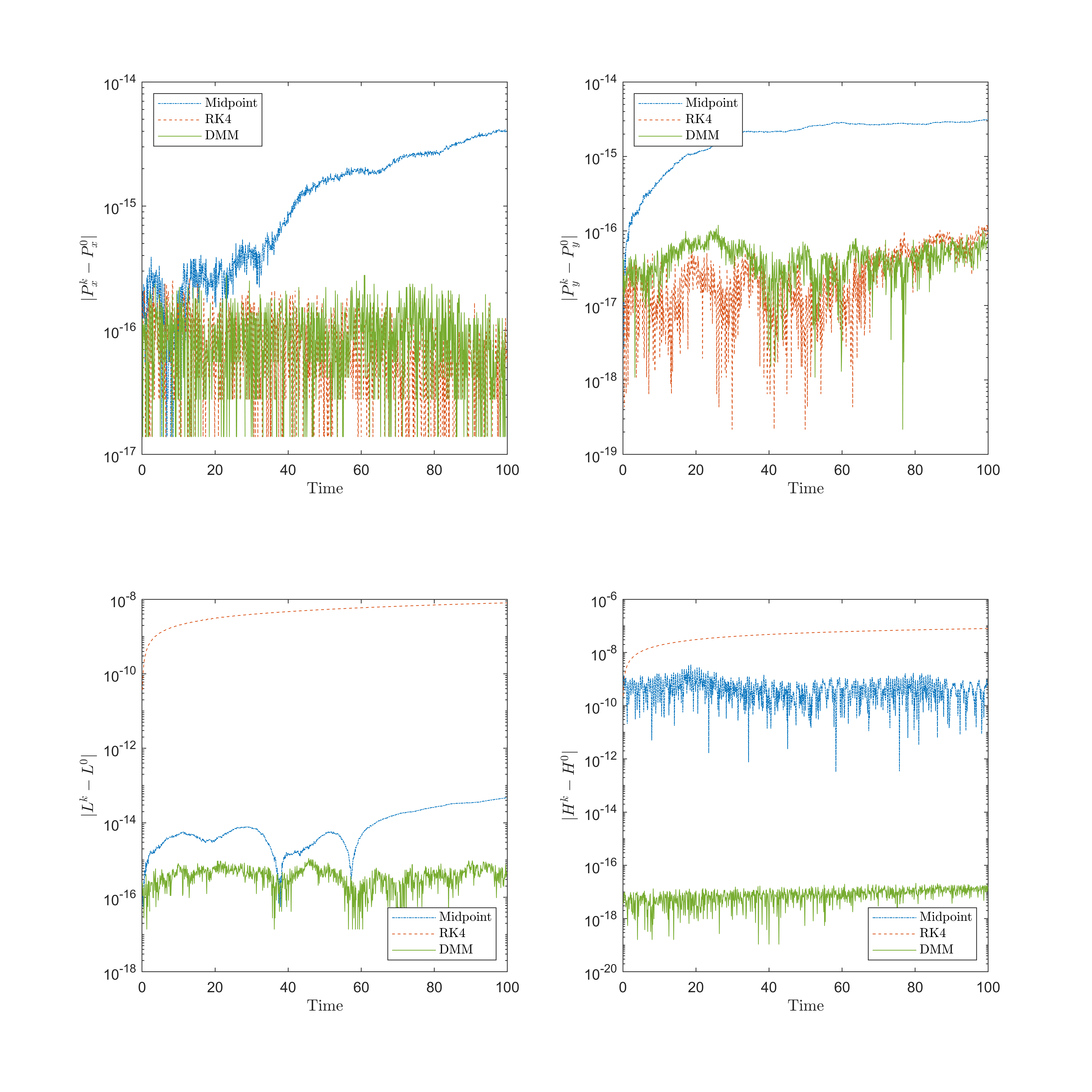}
\centering
\caption{\label{fig:Conserved-Quantities-forPV_Plane_All} Error in conserved quantities versus time for the planar point vortex problem.}
\end{figure}

Figure \ref{fig:Conserved-Quantities-forPV_Plane_All} shows the error in conserved quantities over time and Figure \ref{fig:Trajectories-for-all_PV_N=00003D1000_ALL} shows the trajectories of all $n=1000$ vortices for all three methods considered. We observe that most vortices have qualitatively similar trajectories for all three methods. However, there are a few vortices, with more subtle interactions, showing rather different trajectories. Specifically, we highlight these differences in Figure \ref{fig:Trajectories-for-all_PV_N=00003D1000_ALL} and by zooming in on their dynamics in Figure \ref{fig:Zoom-on-some_PV_N=00003D1000}. 
\begin{figure}[ht!]
\includegraphics[width=36pc]{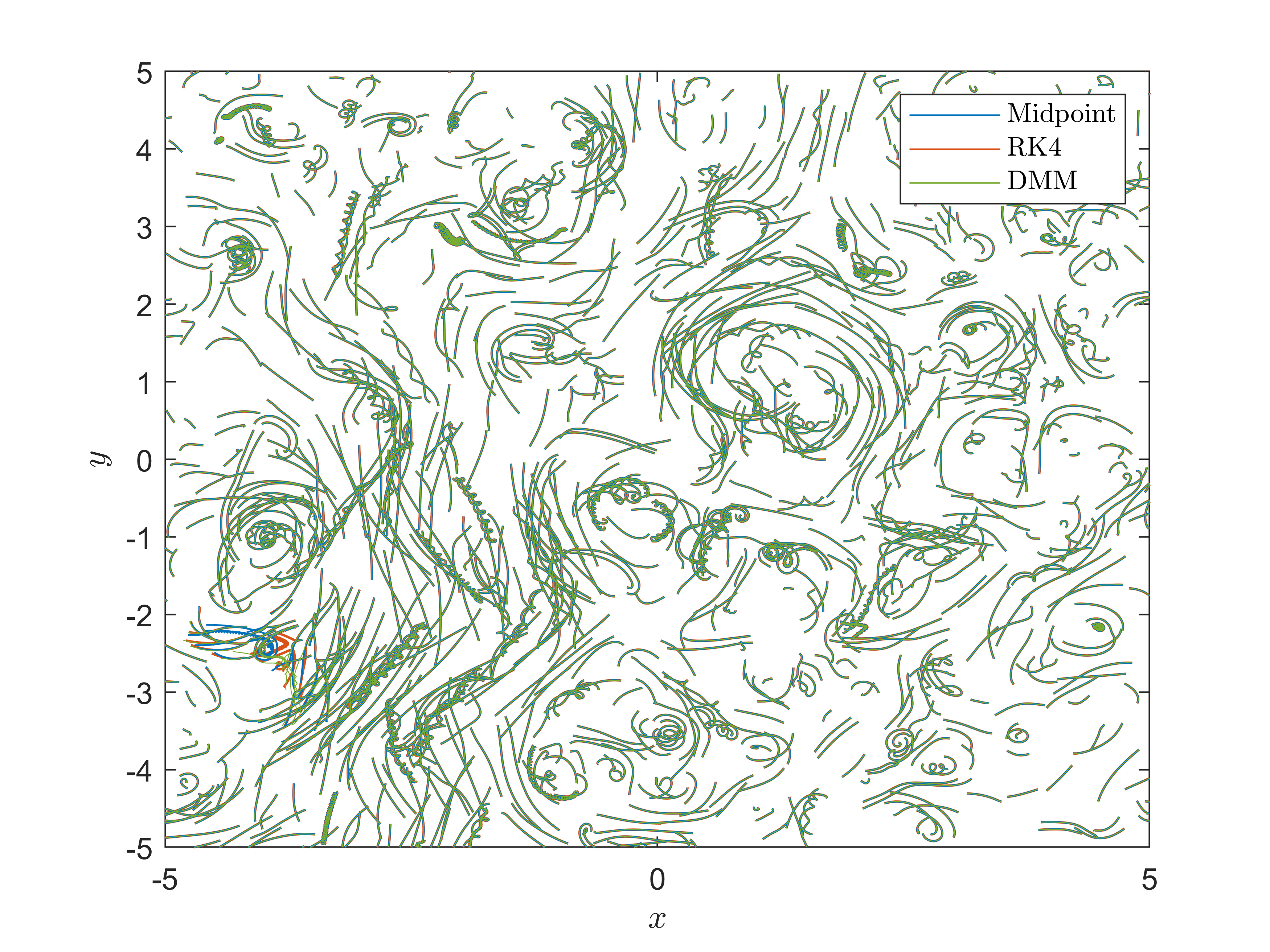}
\caption{\label{fig:Trajectories-for-all_PV_N=00003D1000_ALL}Trajectories for all three methods for the planar point vortex problem with $n=1000$ vortices.}
\end{figure}
\begin{figure}[ht!]
\begin{tabular}{cc}
\includegraphics[width=20pc]{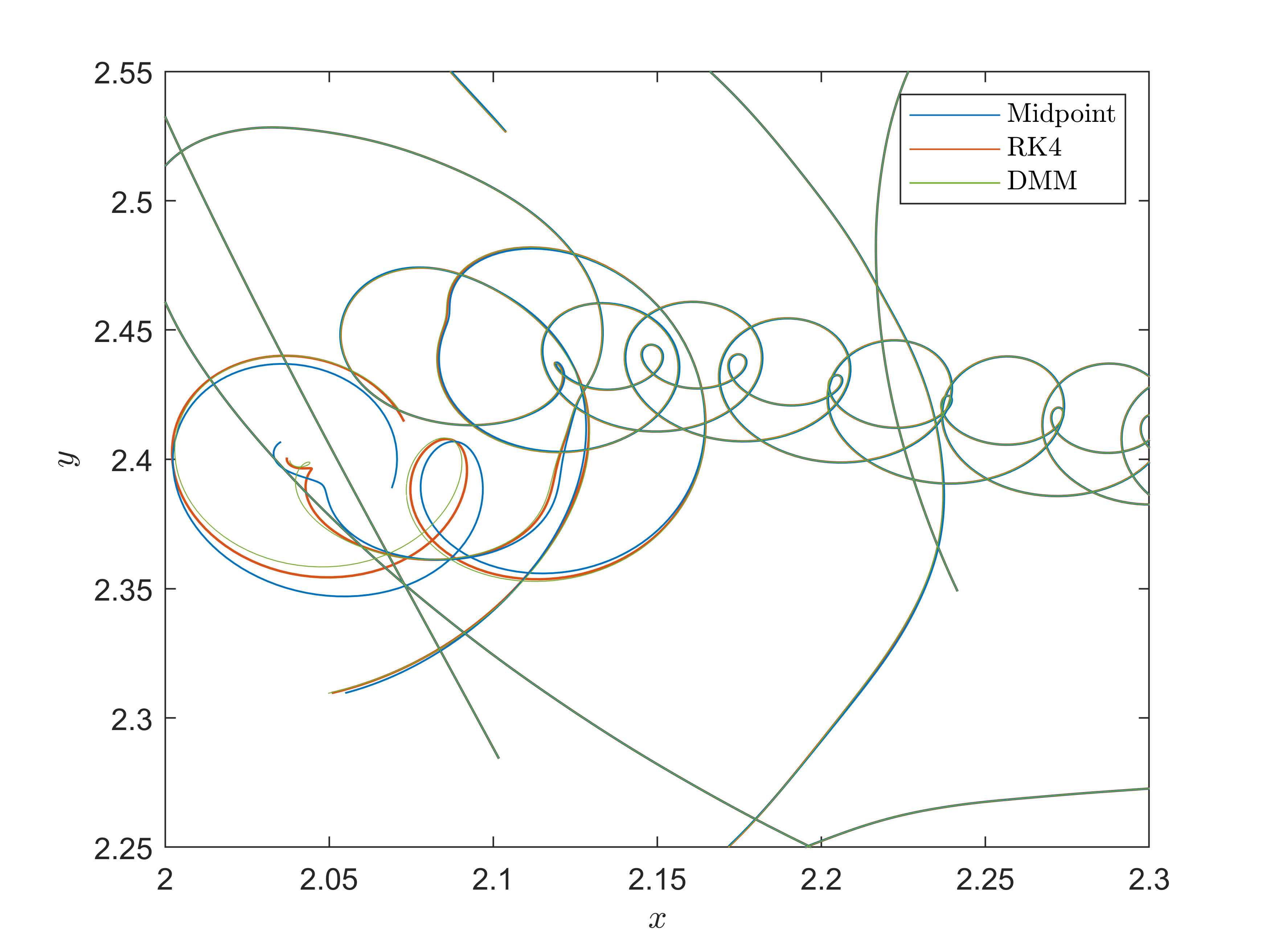} & \includegraphics[width=20pc]{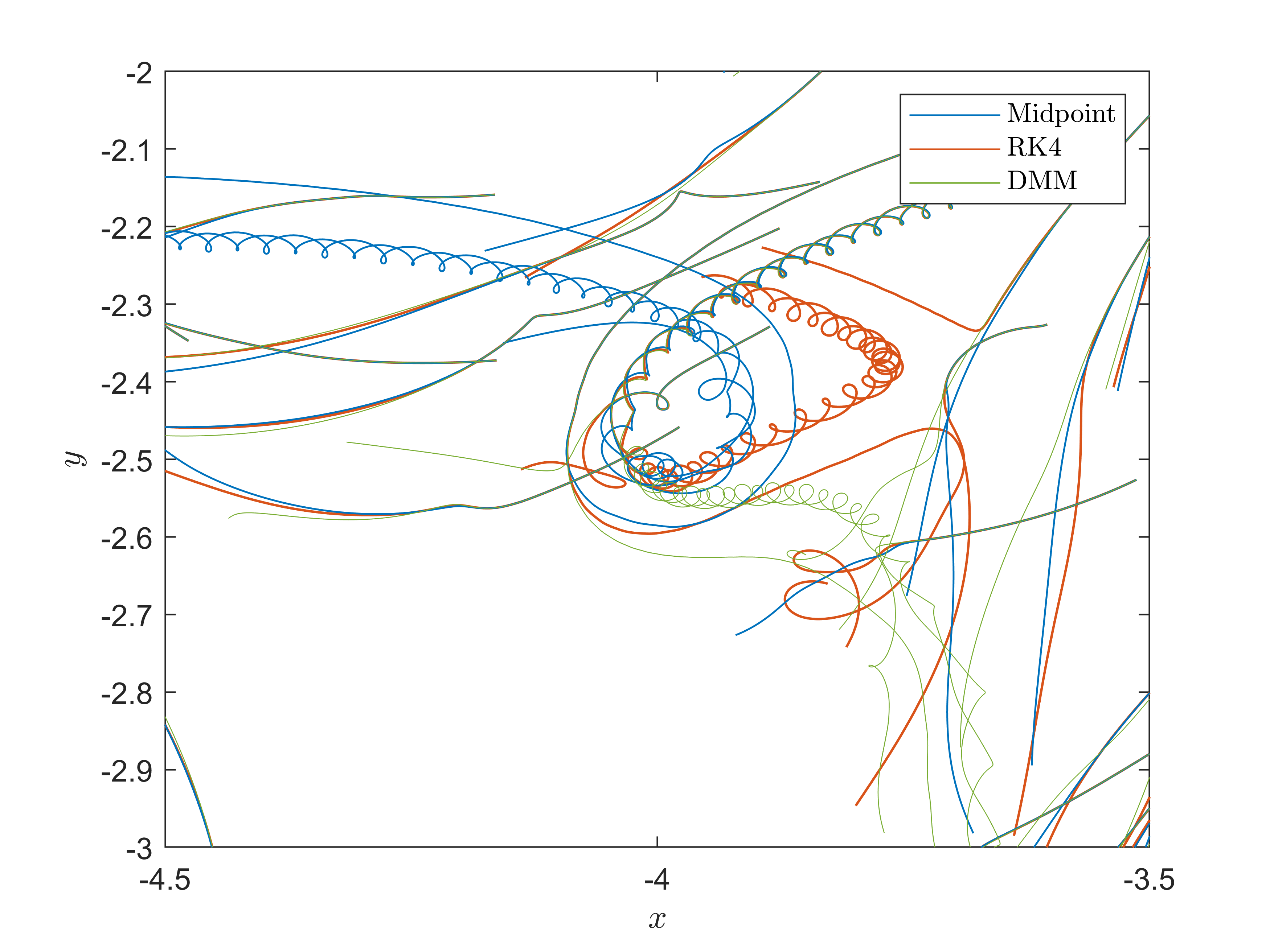}\tabularnewline
\end{tabular}
\caption{\label{fig:Zoom-on-some_PV_N=00003D1000}Closeup of trajectory differences for all three methods for the planar point vortex problem with $n=1000$ vortices.}
\end{figure}While the final time here is relatively short, we emphasize that for long integrations of these point vortex equations, these small differences in the trajectories will likely amplify, with the DMM being the only method preserving the energy up to machine precision.

Table \ref{tab:Conserved-quantities-for PV_Plane_N=00003D1000} shows the error in $\ell^{\infty}$ norm for all four conserved quantities.
\begin{table}[!ht]
\centering
\begin{tabular}{|c|c|c|c|}
\hline 
 & Midpoint & RK4 & DMM\tabularnewline
\hline 
\hline 
$\text{Error}[P_{x}(\bb x)]$ & $4.15\cdot10^{-15}$ & $2.64\cdot10^{-16}$ & $2.78\cdot10^{-16}$\tabularnewline
\hline 
$\text{Error}[P_{y}(\bb y)]$ & $3.16\cdot10^{-15}$ & $1.22\cdot10^{-16}$ & $1.20\cdot10^{-16}$\tabularnewline
\hline 
$\text{Error}[L(\bb x, \bb y)]$ & $4.69\cdot10^{-14}$ & $8.02\cdot10^{-9}$ & $1.11\cdot10^{-15}$\tabularnewline
\hline 
$\text{Error}[H(\bb x, \bb y)]$ & $3.49\cdot10^{-9}$ & $7.96\cdot10^{-8}$ & $2.14\cdot10^{-17}$\tabularnewline
\hline 
\end{tabular}
\caption{\label{tab:Conserved-quantities-for PV_Plane_N=00003D1000}Error in conserved
quantities for $n=1000$ point vortices on the plane.}
\end{table}
This table shows that all methods preserve the linear momentum up to machine precision, with the Midpoint method additionally preserving the angular momentum\footnote{This is expected as Midpoint method preserves all quadratic invariants, see \cite[Chapter IV.2]{hair06Ay}}. The DMM is the only method that also preserves the Hamiltonian up to machine precision. This is particularly noteworthy as the DMM method is only a second--order method, in comparison to the fourth-order Runge--Kutta method.

\subsection{Point vortex problem: Spherical case}

In this final example, similar to the previous section, we investigate and compare numerical results of DMM with the Midpoint method and the standard explicit fourth-order Runge--Kutta method applied to the point vortex equations on the unit sphere.

As with the planar case, the test consists of evolving $n=1000$ randomly distributed vortices. The initial locations were sampled from a uniform distribution
on the unit sphere according to the procedure proposed in~\cite{M72}. As in the planar case, we filtered the initial sampled locations to ensure that no two vortices are closer than a minimum distance of $4\pi/n\approx1.3\cdot10^{-2}$. Furthermore, the vorticity strengths $\Gamma_{i}$ were sampled uniformly from $[-1,1]/n$. We run the simulation to a final time of $T=100$, with $N=1000$ time steps, corresponding to a time step size of $\tau=10^{-1}$. As before, we used a perturbation of the discrete solution from the previous time step for the initial guess in the fixed point iterations.

We show in Table \ref{tab:Conserved-quantities-for PV_Sphere_N=00003D1000} the error in  $\ell^{\infty}$ norm for all four conserved quantities.

\begin{table}[!ht]
\centering
\begin{tabular}{|c|c|c|c|}
\hline 
 & Midpoint & RK4 & DMM\tabularnewline
\hline 
\hline 
$\text{Error}[P_{x}(\bb x)]$ & $5.03\cdot10^{-17}$ & $3.99\cdot10^{-17}$ & $4.16\cdot10^{-17}$\tabularnewline
\hline 
$\text{Error}[P_{y}(\bb x)]$ & $3.95\cdot10^{-17}$ & $4.12\cdot10^{-17}$ & $4.47\cdot10^{-17}$\tabularnewline
\hline 
$\text{Error}[P_{z}(\bb x)]$ & $3.47\cdot10^{-17}$ & $4.86\cdot10^{-17}$ & $5.38\cdot10^{-17}$\tabularnewline
\hline 
$\text{Error}[H(\bb x)]$ & $5.44\cdot10^{-10}$ & $1.55\cdot10^{-11}$ & $3.73\cdot10^{-18}$\tabularnewline
\hline 
\end{tabular}
\caption{\label{tab:Conserved-quantities-for PV_Sphere_N=00003D1000}Error in conserved
quantities versus time for $n=1000$ point vortices on the sphere.}
\end{table}
Figure~\ref{fig:Conserved-Quantities-forPV_Sphere_all} depicts the error in conserved quantities over time for all three methods and Figure~\ref{fig:Trajectories-for-all_PV_Sphere_N=00003D1000_ALL} shows the trajectories of all $1000$ vortices. As with the planar case, we observe that most trajectories of the three methods are qualitatively indistinguishable, although certain vortices do exhibit increasingly diverging trajectories over the short integration interval. This is highlighted by zooming in on specific areas presented in Figure \ref{fig:Zoom-on-some_PV_sphere_N=00003D1000}.

\newpage
\begin{figure}[ht!]
\includegraphics[width=40pc]{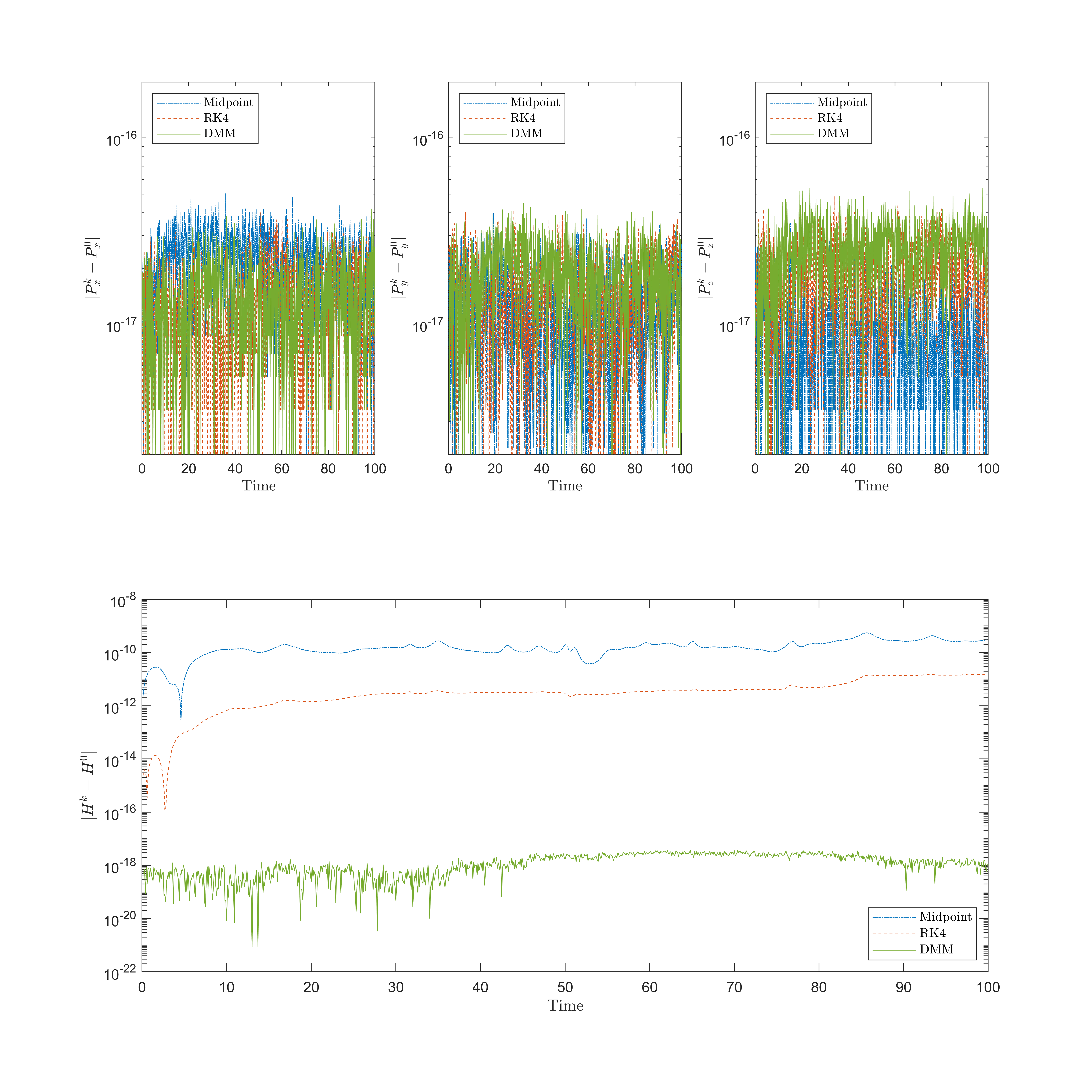}

\caption{\label{fig:Conserved-Quantities-forPV_Sphere_all}Error in conserved quantities for the point vortex problem on the unit sphere with $n=1000$ vortices.}
\end{figure}

\newpage
\begin{figure}[ht!]
\includegraphics[width=40pc]{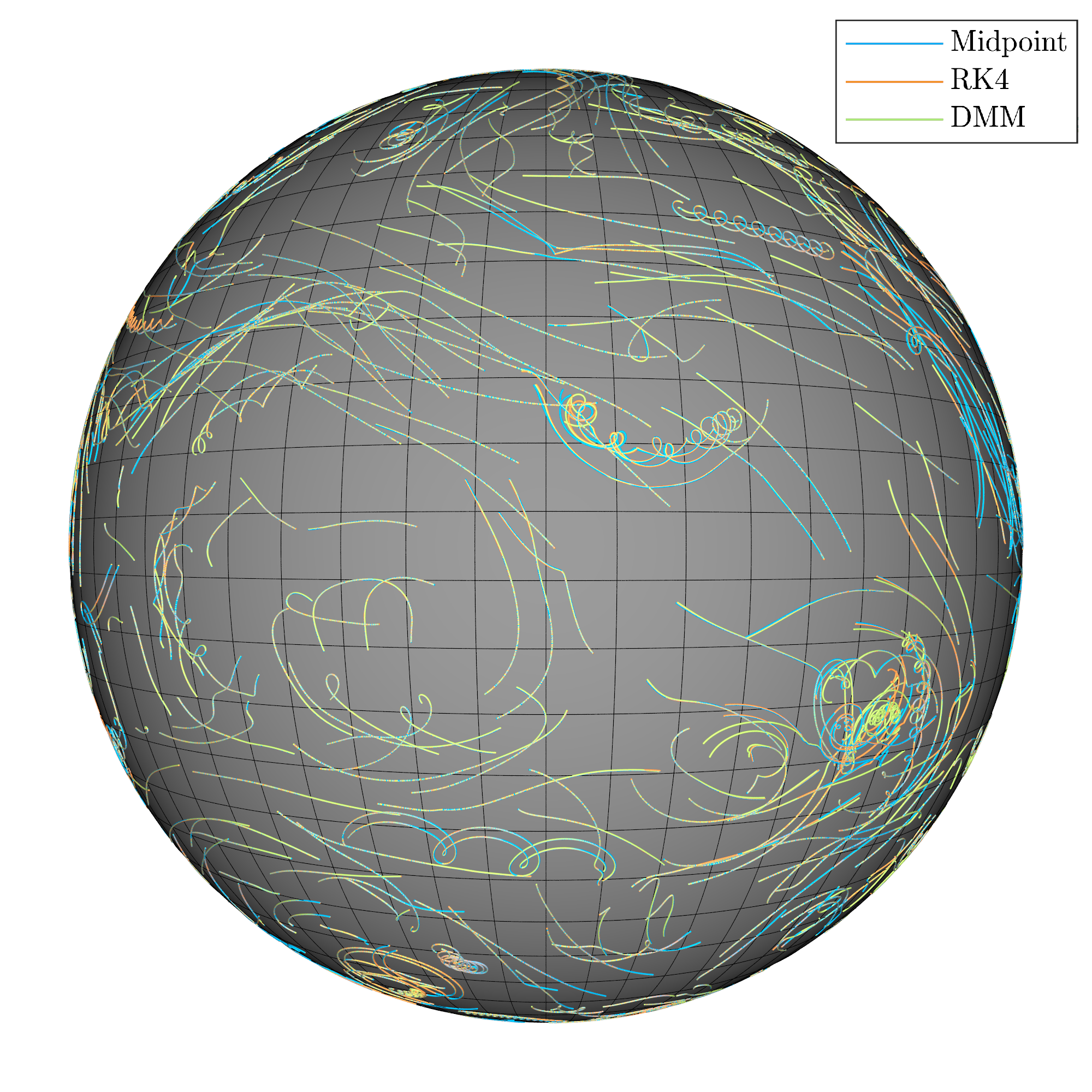}
\caption{\label{fig:Trajectories-for-all_PV_Sphere_N=00003D1000_ALL}Trajectories for all three methods for the point vortex problem on the unit sphere with $n=1000$ vortices.}
\end{figure}

\newpage
\begin{figure}[ht!]
\begin{tabular}{cc}
\includegraphics[width=18pc]{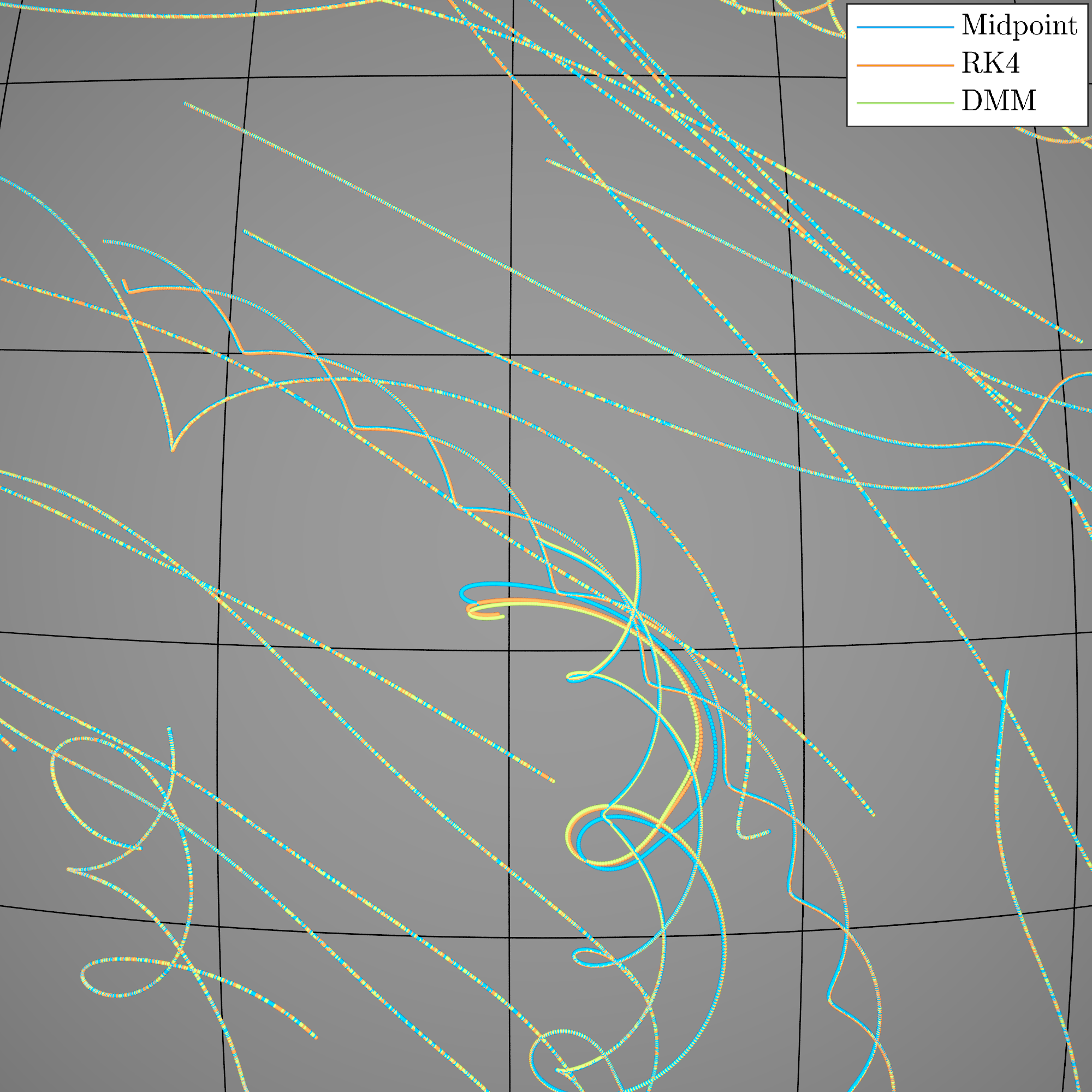} & \includegraphics[width=18pc]{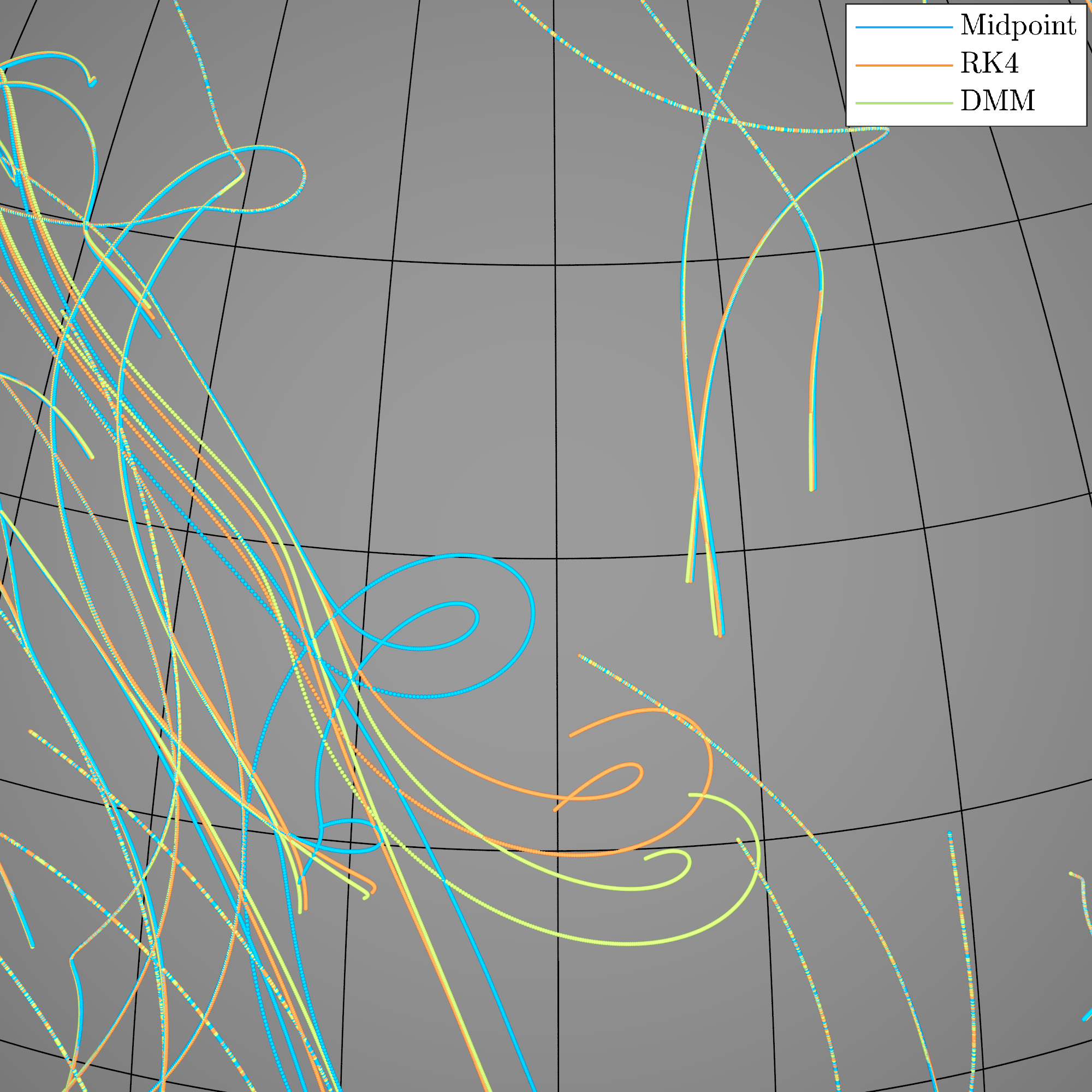}\tabularnewline
\end{tabular}
\caption{\label{fig:Zoom-on-some_PV_sphere_N=00003D1000}Closeup on some diverging trajectories for all three methods for the point vortex problem on the unit sphere with $n=1000$ vortices.}
\end{figure}

\section{Conclusions}\label{sec:ConclusionsManyBody}
In this paper, we have constructed several conservative numerical schemes for important mathematical models arising from a wide variety of different fields. The framework used to derive such numerical schemes is the Discrete Multiplier Method, or DMM, originally developed in~\cite{wan16a}. DMM is not only suitable for low--dimensional dynamical systems with multiple conserved quantities, as shown in \cite{wan16a}, but also as we showed here, it is applicable for constructing conservative schemes for many--body systems. As the DMM does not require additional geometric structures from the underlying dynamical system other than the presence of invariants themselves, this approach can potentially be applied to a wide variety of other dynamical systems arising in the mathematical sciences. 

With the derived conservative schemes for many--body systems, there are still practical drawbacks which we wish to improve in future work. The main drawback of the derived DMM schemes so far is that they are implicit, as it is not currently known if there are explicit DMM schemes for general dynamical systems. For large many-body Hamiltonian systems, explicit methods such as St\"{o}rmer-Verlet method and higher--order symplectic splitting methods are often preferred due to their lower computational costs when comparing accuracy versus number of force evaluations. To reduce the computational costs of the implicit DMM schemes, one can use high--order DMM schemes and specific quasi-Newton methods aimed at improving the efficiency of solving nonlinear equations arising from DMM.

Despite these current practical limitations, we emphasize that our numerical results, specifically in the point vortex examples, indicate that a higher--order method, while more accurate at approximating the solution, does not necessarily imply it is more accurate at preserving conserved quantities when compared to a lower--order conservative method.

\section*{Acknowledgements}

ATSW was partially supported by the CRM and the NSERC Discovery Grant program. AB was supported by the Canada Research Chairs program, the InnovateNL Leverage R{\&}D program and the NSERC Discovery Grant program. JCN was supported by the NSERC Discovery Grant program. Additionally, JCN would like to thank Prof. Wenjun Ying of the Natural Science Institute of the Shanghai Jiao Tong University for his kind hosting. The environment provided by the NSI was invaluable during the early stages of this project.

\bibliography{refs}

\appendix

\section{Verification of conservative and symmetric properties}

\label{sec:Appendix}

In this Appendix, we present some details of the derivations of the conservative schemes and verification of symmetric property used throughout this paper.

Due to the common appearance of the function $g(z) = \dfrac{\log z}{z-1}$, we first show the following elementary lemma useful in proving the symmetric property of our schemes.

\begin{lem}Let $g(z) = \dfrac{\log z}{z-1}$ and $h$ be any scalar function which maps from the phase space into real numbers excluding the zero, i.e. $h:\overbrace{\mathbb{R}^d\times \cdots \times \mathbb{R}^d}^{n \text{ copies}}\rightarrow \mathbb{R}\backslash \{0\}$. Then,
\begin{align}
\frac{1}{h(\bb x^k)}g\left(\frac{h(\bb x^{k+1})}{h(\bb x^k)}\right) = \frac{1}{h(\bb x^{k+1})}g\left(\frac{h(\bb x^{k})}{h(\bb x^{k+1})}\right).
\end{align} \label{lem:sym}
\end{lem}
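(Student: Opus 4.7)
The plan is to prove the identity by reducing both sides to a common symmetric expression in the two scalar values $a := h(\bb x^k)$ and $b := h(\bb x^{k+1})$, which are nonzero by hypothesis. The key observation is that the combination $\frac{1}{a}g(b/a)$ simplifies, via the definition $g(z) = \frac{\log z}{z-1}$, to a ratio of a logarithm to a difference that is manifestly antisymmetric in both numerator and denominator, so swapping $a \leftrightarrow b$ introduces two sign flips that cancel.

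Concretely, I would first substitute $z = b/a$ into the definition of $g$ to obtain
\[
\frac{1}{a}g\!\left(\frac{b}{a}\right) = \frac{1}{a}\cdot\frac{\log(b/a)}{(b/a)-1} = \frac{\log(b/a)}{b-a}.
\]
Next, by the same substitution with $z = a/b$ on the right-hand side, I would obtain
\[
\frac{1}{b}g\!\left(\frac{a}{b}\right) = \frac{\log(a/b)}{a-b}.
\]
Finally, using $\log(a/b) = -\log(b/a)$ and $a-b = -(b-a)$, the two minus signs cancel, yielding equality with the first expression.

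There is essentially no obstacle here, since the argument is a direct algebraic simplification that only requires $a,b \neq 0$ (which is guaranteed by the codomain condition on $h$, ensuring both $b/a$ and $a/b$ lie in the domain of $\log$ and that the denominators $b-a$ in the intermediate steps are handled as removable in the limit sense already built into the fact that $g(z)$ extends continuously to $z=1$ with value $1$). The only subtlety worth noting is the case $a = b$, in which case $g(1) = 1$ by the standard limit and both sides reduce to $1/a = 1/b$, so the identity trivially holds; this edge case can be absorbed into the computation by continuity rather than treated separately.
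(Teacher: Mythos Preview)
Your proof is correct and essentially identical to the paper's: both reduce each side to the symmetric divided-difference expression $\dfrac{\log b - \log a}{b-a}$ (with $a=h(\bb x^k)$, $b=h(\bb x^{k+1})$) and observe that it is invariant under $a\leftrightarrow b$. Your explicit treatment of the edge case $a=b$ via the continuous extension $g(1)=1$ is a small addition the paper does not spell out.
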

\begin{proof}
\[
\frac{1}{h(\bb x^k)}g\left(\frac{\bb x^{k+1}}{\bb x^k}\right) &= \frac{1}{h(\bb x^k)}\frac{\log\left(\dfrac{h(\bb x^{k+1})}{h(\bb x^k)}\right)}{\dfrac{h(\bb x^{k+1})}{h(\bb x^k)}-1} = \frac{\log\left(h(\bb x^{k+1})\right)-\log\left(h(\bb x^{k})\right)}{h(\bb x^{k+1})-h(\bb x^{k})} \\&= \frac{1}{h(\bb x^{k+1})}\frac{\log\left(\dfrac{h(\bb x^{k})}{h(\bb x^{k+1})}\right)}{\dfrac{h(\bb x^{k})}{h(\bb x^{k+1})}-1} = \frac{1}{h(\bb x^{k+1})}g\left(\frac{h(\bb x^{k})}{h(\bb x^{k+1})}\right).
\]
\end{proof}
In other words, $\dfrac{1}{h(\bb x^k)}g\left(\dfrac{h(\bb x^{k+1})}{h(\bb x^k)}\right)$ is symmetric under the permutation $k\leftrightarrow k+1$.

\subsection{Lotka--Volterra systems}
\label{app:LV}

We first show that $V(\bb x)$ is a conserved quantity of \eqref{LVsys} and the derived scheme is conservative and symmetric using Lemma \ref{lem:sym}.

Using \eqref{multCond1}, the associated multiplier $1\times n$ matrix is given by
\[
\Lambda(\bb x):=\begin{bmatrix} d_i\left(\dfrac{\xi_i}{x_i}-1\right)\end{bmatrix}_{1\leq i\leq n}
\]
which verifies $V(\bb x)$ is indeed a conserved quantity since,
\[
\Lambda(\bb x) \bb F(\bb x) &= \sum_{i=1}^n d_i\left(\dfrac{\xi_i}{x_i}-1\right)\dot{x}_i - \sum_{i,j=1}^n d_i\left(\xi_i-x_i\right)a_{ij}(x_j - \xi_j) \\
&= D_t V(\bb x)+\underbrace{(\bb x-\bb \xi)^T DA(\bb x-\bb \xi)}_{=0 \text{ since }DA=-A^TD} = D_t V(\bb x).
\]
We employ DMM to derive conservative schemes for \eqref{LVsys}. Since $V$ is a linear combination of single variable functions, for any permutation $\sigma\in S_n$ of $x_1,\dots, x_n$, the discrete multiplier matrix is given by,
\[
\Lambda^\tau(\bb x^{k+1},\bb x^k) &= \begin{bmatrix} d_i\dfrac{\Delta}{\Delta x_i}\left(\xi_i \log x_i-x_i\right)\end{bmatrix}_{1\leq i\leq n}^T =\begin{bmatrix} d_i\left( \dfrac{\xi_i}{x_i^k}g\left(\dfrac{x_i^{k+1}}{x_i^k}\right)\right)\end{bmatrix}_{1\leq i\leq n}^T,
\] with $g(z) = \dfrac{\log z}{z-1}$.
Next, to discretize the right hand side $\bb f$, we first rewrite
\[
\bb f(\bb  x) &= \begin{bmatrix} x_i \sum_{j=1}^n a_{ij}x_j\left(1-\dfrac{\xi_j}{x_j}\right)\end{bmatrix}_{1\leq i\leq n}.
\] Since $g(z) = 1 + \mathcal{O}(z-1)$, let us propose by consistency of $\bb f^\tau$ to $\bb f$ that
\[
\bb f^\tau(\bb x^{k+1},\bb x^k) &=\begin{bmatrix} x_i^\tau\displaystyle\sum_{j=1}^n a_{ij}x_j^\tau\left(1- \dfrac{\xi_j}{x_j^k}g\left(\dfrac{x_j^{k+1}}{x_j^k}\right)\right)\end{bmatrix}_{1\leq i\leq n},
\] where $x_i^\tau$ is any consistent discretization of $x_i$.
Indeed, \eqref{discMultCond2} is satisfied for this choice of $\bb f^\tau$,
\[
\Lambda^\tau(\bb x^{k+1},\bb x^k)  \bb f^\tau(\bb x^{k+1},\bb x^k) &= \sum_{i,j=1}^n d_i\left( \dfrac{\xi_i}{x_i^k}g\left(\dfrac{x_i^{k+1}}{x_i^k}\right)-1\right)x_i^\tau a_{ij}x_j^\tau\left(1- \dfrac{\xi_j}{x_j^k}g\left(\dfrac{x_j^{k+1}}{x_j^k}\right)\right) \\
&=-\bb y^T DA \bb y = 0,
\] where the components of $\bb y$ are given by $y_i=x_i^\tau\left(1- \dfrac{\xi_i}{x_i^k} g\left(\dfrac{x_i^{k+1}}{x_i^k}\right)\right)$. 

Thus, a conservative scheme for~\eqref{LVsys} is given by~\eqref{LVDisc}. Now by Lemma \ref{lem:sym}, since $\dfrac{1}{x_j^k}g\left(\dfrac{x_j^{k+1}}{x_j^k}\right)$ is symmetric with $h(\bb x^k) = x_j^k$ for all $j=1,\dots,n$, the discretization \eqref{LVDisc} is symmetric provided $x_i^\tau, x_i^\tau$ are symmetric. Specifically, the symmetric choices $x_i^\tau := \frac{1}{2}(x_i^k+x_i^{k+1})$ or $\sqrt{x_i^{k}x_i^{k+1}}$ were selected in \eqref{LVDisc}.

\subsection{Many--body problem with pairwise radial potentials}
\label{app:manybody}

Next, we verify the conserved quantities of \eqref{nBodySys} and show the derived scheme is conservative and symmetric.

One can verify \eqref{nBodySys} has the conserved quantities~\eqref{eq:ConservedQuantitiesManyBodyProblem} using \eqref{discMultCond1}. Indeed, define the vector of conserved quantities $\bb \psi$ as
\vskip -4mm \[
\bb \psi(t,\bb q, \bb p) := \begin{pmatrix} H(\bb q, \bb p) \\ \bb P(\bb q, \bb p) \\ \bb L(\bb q, \bb p) \\ \bb C(t,\bb q, \bb p)\end{pmatrix}.
\]\vskip -1mm Then by \eqref{multCond1}, the associated $10 \times (6n)$ multiplier matrix is given by,
\[
\Lambda(t,\bb q, \bb p) :=
\begin{pmatrix}
\begin{bmatrix}\sum\limits_{j=1, j\neq i}^n \dfrac{\partial V_{ij}}{\partial q_{ij}}\dfrac{1}{q_{ij}}(\bb q_i-\bb q_j)\end{bmatrix}_{1\leq i\leq n}^T & \begin{bmatrix}\dfrac{1}{m_i}\bb p_i\end{bmatrix}_{1\leq i\leq n}^T \\
\begin{bmatrix}0_{3\times 3}\end{bmatrix}_{1\leq i\leq n}^T & \begin{bmatrix}I_{3\times 3}\end{bmatrix}_{1\leq i\leq n}^T \\
-[\Omega(\bb p_i)]_{1\leq i\leq n}^T & [\Omega(\bb q_i)]_{1\leq i\leq n}^T \\
\begin{bmatrix}\dfrac{m_i}{M}I_{3\times 3}\end{bmatrix}_{1\leq i\leq n}^T & -\begin{bmatrix}\dfrac{t}{M}I_{3\times 3}\end{bmatrix}_{1\leq i\leq n}^T
\end{pmatrix},
\] where $\Omega(\bb x)$ denotes the skew-symmetric $3\times 3$ matrix,
\[
\Omega(\bb x) := \begin{bmatrix}
0 & -z & y \\
z & 0 & -x \\
-y & x & 0 
\end{bmatrix},
\]associated with the cross product of $\bb x=(x,y,z)^T\in\mathbb{R}^3$ such that $\Omega(\bb x) \bb y = \bb x \times \bb y$ for $\bb y \in\mathbb{R}^3$.
Thus, \eqref{multCond2} is satisfied since,

\[
\Lambda \bb f &= 
\begin{pmatrix}
\sum\limits_{1\leq i\neq j \leq n} \dfrac{\partial V_{ij}}{\partial q_{ij}}\dfrac{1}{q_{ij}}\left[\dfrac{1}{m_i}\bb p_i^T(\bb q_i-\bb q_j)-(\bb q_i-\bb q_j)^T\dfrac{\bb p_i}{m_i}\right] \\
\sum\limits_{1\leq i\neq j \leq n} \dfrac{\partial V_{ij}}{\partial q_{ij}}\dfrac{1}{q_{ij}}(\bb q_i-\bb q_j) \\
-\sum\limits_{1\leq i \leq n} \dfrac{1}{m_i}\Omega(\bb p_i) \bb p_i + \sum\limits_{1\leq i\neq j \leq n}\dfrac{\partial V_{ij}}{\partial q_{ij}}\dfrac{1}{q_{ij}}\Omega(\bb q_i)(\bb q_i-\bb q_j) \\
\dfrac{1}{M}\sum\limits_{1\leq i \leq n} \bb p_i - t \sum\limits_{1\leq i \leq n}\dfrac{\partial V_{ij}}{\partial q_{ij}}\dfrac{1}{q_{ij}}(\bb q_i-\bb q_j)
\end{pmatrix}\\
&=\begin{pmatrix}
0 \\
\sum\limits_{1\leq i<j \leq n} \dfrac{\partial V_{ij}}{\partial q_{ij}}\left(\dfrac{\bb q_i-\bb q_j}{q_{ij}}+\dfrac{\bb q_j-\bb q_i}{q_{ji}}\right) \\
-\sum\limits_{1\leq i \leq n} \dfrac{1}{m_i}\bb p_i\times\bb p_i + \sum\limits_{1\leq i<j \leq n}\dfrac{\partial V_{ij}}{\partial q_{ij}}\left(\dfrac{\bb q_i}{q_{ij}}\times(\bb q_i-\bb q_j)+\dfrac{\bb q_j}{q_{ji}}\times(\bb q_j-\bb q_i)\right) \\
\dfrac{\bb P}{M}- t \sum\limits_{1\leq i \leq n}\dfrac{\partial V_{ij}}{\partial q_{ij}}\dfrac{1}{q_{ij}}\left(\dfrac{\bb q_i-\bb q_j}{q_{ij}}+\dfrac{\bb q_j-\bb q_i}{q_{ji}}\right)
\end{pmatrix}=\begin{pmatrix}
0 \\
\bb 0 \\
\bb 0 \\
\dfrac{\bb P}{M}
\end{pmatrix}\\
&= -\partial_t \bb \psi,
\] where the second last equality follows from $q_{ji}=q_{ij}, \bb q_j - \bb q_i = -(\bb q_i - \bb q_j)$ and properties of the cross-product. \\

We now employ DMM to derive conservative schemes for \eqref{nBodySys}. For simplicity, we will propose consistent choices of $D_t^\tau \bb x$, $D_t^\tau \bb \psi$, $\partial_t^\tau \bb \psi$, $\Lambda^\tau$ and $\bb f^\tau$ such that conditions \eqref{discMultCond1} and \eqref{discMultCond2} are satisfied. To accomplish this, we define $D_t^\tau \bb x$, $D_t^\tau \bb \psi$ and $\partial_t^\tau \bb \psi$ by,
\[
D_t^\tau \bb x := \dfrac{1}{\tau}\begin{pmatrix} \Delta \bb q \\ \Delta \bb p \end{pmatrix}, \hskip 5mm
D_t^\tau \bb \psi := \dfrac{1}{\tau}\begin{pmatrix}  \Delta H  \\
\Delta \bb P \\
\Delta \bb L \\
\Delta \bb C
 \end{pmatrix}, \hskip 5mm
\partial_t^\tau \bb \psi := \begin{pmatrix} 0\\ \bb 0 \\ \bb 0 \\ -\dfrac{\overline{\bb P}}{M} \end{pmatrix}.
\]
For the discrete multiplier $\Lambda^\tau$ and the discrete right hand side $\bb f^\tau$, we define
\[
\Lambda^\tau(t^{k+1}, \bb q^{k+1}, \bb p^{k+1}, t^{k}, \bb q^{k}, \bb p^{k}) &:=
\begin{pmatrix}
\begin{bmatrix}\sum\limits_{j=1, j\neq i}^n \dfrac{\Delta V_{ij}}{\Delta q_{ij}}\dfrac{1}{\overline{q_{ij}}}(\overline{\bb q_i}-\overline{\bb q_j})\end{bmatrix}_{1\leq i\leq n}^T & \begin{bmatrix}\dfrac{1}{m_i}\overline{\bb p_i}\end{bmatrix}_{1\leq i\leq n}^T \\
\begin{bmatrix}0_{3\times 3}\end{bmatrix}_{1\leq i\leq n}^T & \begin{bmatrix}I_{3\times 3}\end{bmatrix}_{1\leq i\leq n}^T \\
-[\Omega(\overline{\bb p_i})]_{1\leq i\leq n}^T & [\Omega(\overline{\bb q_i})]_{1\leq i\leq n}^T \\
\begin{bmatrix}\dfrac{m_i}{M}I_{3\times 3}\end{bmatrix}_{1\leq i\leq n}^T & -\begin{bmatrix}\dfrac{t^{k+1}+t^k}{2M}I_{3\times 3}\end{bmatrix}_{1\leq i\leq n}^T
\end{pmatrix}, \\
\bb f^\tau(t^{k+1}, \bb q^{k+1}, \bb p^{k+1}, t^{k}, \bb q^{k}, \bb p^{k}) &:= \begin{pmatrix}
 \begin{bmatrix}\dfrac{\overline{\bb p_i}}{m_i}\end{bmatrix}_{1\leq i\leq n}  \\
 -\begin{bmatrix}\sum\limits_{j=1, j\neq i}^n \dfrac{\Delta V_{ij}}{\Delta q_{ij}}\dfrac{1}{\overline{q_{ij}}}(\overline{\bb q_i}-\overline{\bb q_j})\end{bmatrix}_{1\leq i\leq n}
 \end{pmatrix}.
\] 
It can be seen that $\Lambda^\tau$ and $\bb f^\tau$ are consistent to $\Lambda$ and $\bb f$. We now verify the above choices satisfy condition \eqref{discMultCond2}.

\begin{align}
\Lambda^\tau \bb f^\tau &= 
\begin{pmatrix}
\sum\limits_{1\leq i\neq j \leq n} \dfrac{\Delta V_{ij}}{\Delta q_{ij}}\dfrac{1}{\overline{q_{ij}}}\left[\dfrac{1}{m_i}\overline{\bb p_i}^T(\overline{\bb q_i}-\overline{\bb q_j})-(\overline{\bb q_i}-\overline{\bb q_j})^T\dfrac{\overline{\bb p_i}}{m_i} \right] \\
\sum\limits_{1\leq i\neq j \leq n} \dfrac{\Delta V_{ij}}{\Delta q_{ij}}\dfrac{1}{\overline{q_{ij}}}(\overline{\bb q_i}-\overline{\bb q_j}) \\
-\sum\limits_{1\leq i \leq n} \dfrac{1}{m_i}\Omega(\overline{\bb p_i}) \overline{\bb p_i} + \sum\limits_{1\leq i\neq j \leq n}\dfrac{\Delta V_{ij}}{\Delta q_{ij}}\dfrac{1}{\overline{q_{ij}}}\Omega(\overline{\bb q_i})(\overline{\bb q_i}-\overline{\bb q_j}) \\
\dfrac{1}{M}\sum\limits_{1\leq i \leq n} \overline{\bb p_i} - \dfrac{t^{k+1}+t^k}{2M} \sum\limits_{1\leq i\neq j \leq n}\dfrac{\Delta V_{ij}}{\Delta q_{ij}}\dfrac{1}{\overline{q_{ij}}}(\overline{\bb q_i}-\overline{\bb q_j})
\end{pmatrix} \nonumber\\
&=\begin{pmatrix}
0 \\
\sum\limits_{1\leq i<j \leq n} \dfrac{\Delta V_{ij}}{\Delta q_{ij}}\left(\dfrac{\overline{\bb q_i}-\overline{\bb q_j}}{\overline{q_{ij}}}+\dfrac{\overline{\bb q_j}-\overline{\bb q_i}}{\overline{q_{ji}}}\right) \\
-\sum\limits_{1\leq i \leq n} \dfrac{1}{m_i}\overline{\bb p_i}\times\overline{\bb p_i} + \sum\limits_{1\leq i<j \leq n}\dfrac{\Delta V_{ij}}{\Delta q_{ij}}\left(\dfrac{\overline{\bb q_i}}{\overline{q_{ij}}}\times(\overline{\bb q_i}-\overline{\bb q_j})+\dfrac{\overline{\bb q_j}}{\overline{q_{ji}}}\times(\overline{\bb q_j}-\overline{\bb q_i})\right) \\
\dfrac{\overline{\bb P}}{M}- \dfrac{t^{k+1}+t^k}{2M} \sum\limits_{1\leq i<j \leq n}\dfrac{\Delta V_{ij}}{\Delta q_{ij}}\dfrac{1}{\overline{q_{ij}}}\left(\dfrac{\overline{\bb q_i}-\overline{\bb q_j}}{\overline{q_{ij}}}+\dfrac{\overline{\bb q_j}-\overline{\bb q_i}}{\overline{q_{ji}}}\right)
\end{pmatrix} \label{eq:manybody_appendix_calc}\\
&=\begin{pmatrix}
0 \\
\bb 0 \\
\bb 0 \\
\dfrac{\overline{\bb P}}{M}
\end{pmatrix} = -\partial_t^\tau \bb \psi.\nonumber
\end{align}
By direct computation or using divided difference calculus, one can show that
\[
\Delta\left( \dfrac{\bb p_i^T \bb p_i}{2} \right) &= \overline{\bb p_i}^T\Delta \bb p_i ,\\
\Delta V_{ij }&= \dfrac{\Delta V_{ij}}{\Delta q_{ij}}\dfrac{1}{\overline{q_{ij}}}(\overline{\bb q_i}-\overline{\bb q_j})^T\Delta (\overline{\bb q_i}-\overline{\bb q_j}), \\
\Delta \left( \bb q_i \times \bb p_i\right) &= \overline{\bb q_i} \times \Delta \bb p_i+\Delta \bb q_i\times \overline{\bb p_i}, \\
\Delta(t\bb p_i) &= \left(\dfrac{t^{k+1}+t^k}{2}\right)\Delta \bb p_i+ \tau \overline{\bb p_i}.
\]
So by linearity of the forward difference $\Delta$, condition \eqref{discMultCond1} also holds since,
\[
\Lambda^\tau D_t^\tau \bb x &= 
\dfrac{1}{\tau}\begin{pmatrix}
\sum\limits_{1\leq i \leq n}\dfrac{1}{m_i}\overline{\bb p_i}^T\Delta \bb p_i+\sum\limits_{1\leq i\neq j \leq n} \dfrac{\Delta V_{ij}}{\Delta q_{ij}}\dfrac{1}{\overline{q_{ij}}}(\overline{\bb q_i}-\overline{\bb q_j})^T\Delta \bb q_i \\
\sum\limits_{1\leq i \leq n} \Delta \bb p_i \\
\sum\limits_{1\leq i \leq n} \overline{\bb q_i} \times \Delta \bb p_i-\overline{\bb p_i} \times \Delta \bb q_i \\
\dfrac{1}{M}\sum\limits_{1\leq i \leq n} m_i \Delta \bb q_i - \dfrac{1}{M} \sum\limits_{1\leq i \leq n} \left(\dfrac{t^{k+1}+t^k}{2}\right)\Delta \bb p_i
\end{pmatrix} \\
&= \dfrac{1}{\tau}\begin{pmatrix}
\sum\limits_{1\leq i \leq n}\Delta\left(\dfrac{\bb p_i^T\bb p_i}{2 m_i}\right)+\sum\limits_{1\leq i< j \leq n} \dfrac{\Delta V_{ij}}{\Delta q_{ij}}\dfrac{1}{\overline{q_{ij}}}(\overline{\bb q_i}-\overline{\bb q_j})^T\Delta (\overline{\bb q_i}-\overline{\bb q_j}) \\
 \Delta \left( \sum\limits_{1\leq i \leq n} \bb p_i\right) \\
\sum\limits_{1\leq i \leq n} \Delta \left( \bb q_i \times \bb p_i\right) \\
\Delta\left(\dfrac{1}{M}\sum\limits_{1\leq i \leq n} m_i \Delta \bb q_i\right) - \dfrac{1}{M}\sum\limits_{1\leq i \leq n} \Delta(t \bb p_i) -\tau\overline{\bb p_i}
\end{pmatrix}\\
&=\dfrac{1}{\tau}\begin{pmatrix}
\sum\limits_{1\leq i \leq n}\Delta\left(\dfrac{\bb p_i^T\bb p_i}{2 m_i}\right)+\sum\limits_{1\leq i< j \leq n} \Delta V_{ij} \\
 \Delta \bb P \\
 \Delta \bb L \\
\Delta\left(\dfrac{1}{M}\left(\sum\limits_{1\leq i \leq n} m_i \bb q_i\right) - \dfrac{P}{M}t \right) +\dfrac{\tau}{M} \overline{\bb P}\end{pmatrix}
=\dfrac{1}{\tau}\begin{pmatrix}
\Delta H \\
\Delta \bb P \\
\Delta \bb L \\
\Delta \bb C
\end{pmatrix} + \begin{pmatrix}
0 \\
\bb 0 \\
\bb 0 \\
\dfrac{\overline{\bb P}}{M}
\end{pmatrix} =
D_t^\tau \bb \psi -\partial_t^\tau \bb \psi.
\]
Thus, the discretization \eqref{nBodyDisc} is indeed a conservative scheme for~\eqref{nBodySys}. Moreover, since $\overline{\bb p_i}, \overline{\bb q_i}$ and $\overline{q_{ij}}$ are symmetric under the permutation of $k\leftrightarrow k+1$, \eqref{nBodyDisc} is a symmetric scheme provided that $\dfrac{\Delta V_{ij}}{\Delta q_{ij}}$ is symmetric under the permutation of $k\leftrightarrow k+1$. Indeed, this is true by the definition of the divided difference of $\dfrac{\Delta V_{ij}}{\Delta q_{ij}}$ and that $V_{ij}$ is a function of $q_{ij}$.

\subsection{Point vortex problem: Planar case}

Here, we verify the conserved quantities of \eqref{pvPlane} and show the derived scheme is conservative and symmetric using Lemma \ref{lem:sym}.

First, we verify that~\eqref{eq:ConservedQuantitiesPlanarPointVortex} are indeed conserved quantities of \eqref{pvPlane} by defining the conserved vector $\bb \psi$ as,
\[
\bb \psi(\bb x, \bb y) := \begin{pmatrix} 
\bb P(\bb x, \bb y) \\
L(\bb x, \bb y) \\
H(\bb x, \bb y)
\end{pmatrix}.
\] 
Then by \eqref{multCond1}, the associated $4 \times (2n)$ multiplier matrix is given by,
\[
\Lambda(\bb x, \bb y) := \begin{pmatrix}
\begin{bmatrix}\Gamma_i\end{bmatrix}_{1\leq i\leq n}^T & \begin{bmatrix} 0\end{bmatrix}_{1\leq i\leq n}^T \\
\begin{bmatrix}0\end{bmatrix}_{1\leq i\leq n}^T & \begin{bmatrix}\Gamma_i\end{bmatrix}_{1\leq i\leq n}^T \\
\begin{bmatrix}2\Gamma_i x_i\end{bmatrix}_{1\leq i\leq n}^T & \begin{bmatrix}2\Gamma_i y_i\end{bmatrix}_{1\leq i\leq n}^T \\
\begin{bmatrix}-\dfrac{1}{2\pi}\Gamma_i\sum\limits_{j=1, j\neq i}^n \Gamma_j \dfrac{x_{ij}}{ r_{ij}^2}\end{bmatrix}_{1\leq i\leq n}^T & \begin{bmatrix}-\dfrac{1}{2\pi}\Gamma_i\sum\limits_{j=1, j\neq i}^n \Gamma_j \dfrac{y_{ij}}{ r_{ij}^2}\end{bmatrix}_{1\leq i\leq n}^T
\end{pmatrix}
\]
So \eqref{multCond2} is satisfied since,
\[
\Lambda(\bb x, \bb y) \bb f(\bb x, \bb y) &= \begin{pmatrix}
\dfrac{1}{2\pi}\sum\limits_{1\leq i\neq j\leq n} \Gamma_i\Gamma_j \dfrac{y_{ij}}{ r_{ij}^2} \\
-\dfrac{1}{2\pi}\sum\limits_{1\leq i\neq j\leq n} \Gamma_i\Gamma_j \dfrac{x_{ij}}{ r_{ij}^2} \\
\dfrac{1}{\pi}\sum\limits_{1\leq i \neq j\leq n} \Gamma_i \Gamma_j \dfrac{y_{ij}x_i-x_{ij}y_i}{ r_{ij}^2} \\
-\dfrac{1}{2\pi}\sum\limits_{1\leq i\leq n} \Gamma_i \sum\limits_{j,l=1, j,l\neq i}^n \Gamma_j\Gamma_l \left(\dfrac{y_{ij}}{ r_{ij}^2}\dfrac{x_{il}}{ r_{il}^2} - \dfrac{x_{ij}}{ r_{ij}^2} \dfrac{y_{il}}{ r_{il}^2} \right)
\end{pmatrix} \\
&=
\begin{pmatrix}
\dfrac{1}{2\pi}\sum\limits_{1\leq i<j\leq n} \Gamma_i\Gamma_j \left(\dfrac{y_{ij}}{ r_{ij}^2}+\dfrac{y_{ji}}{ r_{ji}^2}\right) \\
-\dfrac{1}{2\pi}\sum\limits_{1\leq i<j\leq n} \Gamma_i\Gamma_j \left(\dfrac{x_{ij}}{ r_{ij}^2}+\dfrac{x_{ji}}{ r_{ji}^2}\right) \\
\dfrac{1}{\pi}\sum\limits_{1\leq i<j\leq n} \Gamma_i \Gamma_j \left(\dfrac{y_{ij}x_i-x_{ij}y_i}{ r_{ij}^2}+\dfrac{y_{ji}x_j-x_{ji}y_j}{ r_{ji}^2}\right) \\
-\dfrac{1}{2\pi}\sum\limits_{1\leq i\leq n} \Gamma_i \left(\sum\limits_{j,l=1, j,l\neq i}^n \Gamma_j\Gamma_l  \dfrac{y_{ij}}{ r_{ij}^2}\dfrac{x_{il}}{ r_{il}^2}- \sum\limits_{j,l=1, j,l\neq i}^n \Gamma_j\Gamma_l \dfrac{x_{ij}}{ r_{ij}^2} \dfrac{y_{il}}{ r_{il}^2} \right)
\end{pmatrix} = \bb 0,
\] where the last equality follows from 
\[
r_{ji} &= r_{ij}, \\
y_{ji}&=-y_{ij}, \\
x_{ji}&=-x_{ij}, \\
y_{ji}x_j-x_{ji}y_j &= y_ix_j-x_i y_j=-(y_{ij}x_i-x_{ij}y_i).
\]

Similar to the $n$-body problem, we will propose consistent choices of of $D_t^\tau \bb x$, $D_t^\tau \bb \psi$, $\partial_t^\tau \bb \psi$, $\Lambda^\tau$ and $\bb f^\tau$ and verify that both conditions \eqref{discMultCond1} and \eqref{discMultCond2} are satisfied. Analogous to the $n$-body problem, we define
\[
D_t^\tau \bb x := \dfrac{1}{\tau}\begin{pmatrix} \Delta \bb x \\ \Delta \bb y \end{pmatrix}, \hskip 5mm D_t^\tau \bb \psi := \dfrac{1}{\tau}\begin{pmatrix}
\Delta \bb P \\
\Delta L \\
\Delta H
 \end{pmatrix}, \hskip 5mm
\partial_t^\tau \bb \psi := \bb 0.
\]
Let us define the discrete multiplier $\Lambda^\tau$ and the discrete right hand side $\bb f^\tau$ as,

\[
&\Lambda^\tau(\bb x^{k+1}, \bb y^{k+1}, \bb x^k, \bb y^k) := \\
&\hskip 10mm\begin{pmatrix}
\begin{bmatrix}\Gamma_i\end{bmatrix}_{1\leq i\leq n}^T & \begin{bmatrix} 0\end{bmatrix}_{1\leq i\leq n}^T \\
\begin{bmatrix}0\end{bmatrix}_{1\leq i\leq n}^T & \begin{bmatrix}\Gamma_i\end{bmatrix}_{1\leq i\leq n}^T \\
\begin{bmatrix}2\Gamma_i \overline{x_i}\end{bmatrix}_{1\leq i\leq n}^T & \begin{bmatrix}2\Gamma_i \overline{y_i}\end{bmatrix}_{1\leq i\leq n}^T \\
\begin{bmatrix}-\dfrac{1}{2\pi}\Gamma_i\sum\limits_{j=1, j\neq i}^n \Gamma_j \dfrac{\overline{x_{ij}}}{(r_{ij}^k)^2}g\left(z_{ij}\right) \end{bmatrix}_{1\leq i\leq n}^T & \begin{bmatrix}-\dfrac{1}{2\pi}\Gamma_i\sum\limits_{j=1, j\neq i}^n \Gamma_j \dfrac{\overline{y_{ij}}}{(r_{ij}^k)^2}g\left(z_{ij}\right)\end{bmatrix}_{1\leq i\leq n}^T
\end{pmatrix},\\
&\bb f^\tau(\bb x^{k+1}, \bb y^{k+1}, \bb x^k, \bb y^k) := \begin{pmatrix} 
-\begin{bmatrix}
\dfrac{1}{2\pi}\sum\limits_{1\leq j \leq n, j\neq i} 
\dfrac{\Gamma_j \overline{y_{ij}}}{( r_{ij}^k)^2}g\left(z_{ij}\right) \end{bmatrix}_{1\leq i\leq n} \\ \begin{bmatrix}
\dfrac{1}{2\pi}\sum\limits_{1\leq j \leq n, j\neq i}  \dfrac{\Gamma_j \overline{x_{ij}}}{( r_{ij}^k)^2}g\left(z_{ij}\right) \end{bmatrix}_{1\leq i\leq n}
\end{pmatrix},
\] where for brevity we have denoted $\displaystyle z_{ij} := \left(\dfrac{ r_{ij}^{k+1}}{ r_{ij}^{k}}\right)^2$. Since $\displaystyle g(z_{ij}) \rightarrow 1$ as $z_{ij} \rightarrow 1$ when $\tau \rightarrow 0$, $\Lambda^\tau$ and $\bb f^\tau$ are consistent to $\Lambda$ and $\bb f$. Next, we verify condition \eqref{discMultCond2}.
\[\Lambda^\tau \bb f^\tau
&= \begin{pmatrix}
\dfrac{1}{2\pi}\sum\limits_{1\leq i\neq j\leq n} \Gamma_i\Gamma_j \dfrac{\overline{y_{ij}}}{(r_{ij}^k)^2}g\left(z_{ij}\right) \\
-\dfrac{1}{2\pi}\sum\limits_{1\leq i\neq j\leq n} \Gamma_i\Gamma_j \dfrac{\overline{x_{ij}}}{(r_{ij}^k)^2}g\left(z_{ij}\right) \\
\dfrac{1}{\pi}\sum\limits_{1\leq i \neq j\leq n} \Gamma_i \Gamma_j \dfrac{\overline{y_{ij}}\hskip 1mm\overline{x_i}-\overline{x_{ij}}\hskip 1mm\overline{y_i}}{(r_{ij}^k)^2}g\left(z_{ij}\right) \\
-\dfrac{1}{2\pi}\sum\limits_{1\leq i\leq n} \Gamma_i \sum\limits_{j,l=1, j,l\neq i}^n \Gamma_j\Gamma_l \left(\dfrac{\overline{y_{ij}}}{(r_{ij}^k)^2}\dfrac{\overline{x_{il}}}{(r_{il}^k)^2}- \dfrac{\overline{x_{ij}}}{(r_{ij}^k)^2} \dfrac{\overline{y_{il}}}{(r_{il}^k)^2} \right)g\left(z_{ij}\right)g\left(z_{il}\right)\end{pmatrix}
\]
\[
&=
\begin{pmatrix}
\dfrac{1}{2\pi}\sum\limits_{1\leq i<j\leq n} \Gamma_i\Gamma_j \left(\dfrac{\overline{y_{ij}}}{(r_{ij}^k)^2}g\left(z_{ij}\right)+\dfrac{\overline{y_{ji}}}{(r_{ji}^k)^2}g\left(z_{ji}\right)\right) \\
-\dfrac{1}{2\pi}\sum\limits_{1\leq i<j\leq n} \Gamma_i\Gamma_j \left(\dfrac{\overline{x_{ij}}}{(r_{ij}^k)^2}g\left(z_{ij}\right)+\dfrac{\overline{x_{ji}}}{(r_{ji}^k)^2}g\left(z_{ji}\right)\right) \\
\dfrac{1}{\pi}\sum\limits_{1\leq i<j\leq n} \Gamma_i \Gamma_j \left(\dfrac{\overline{y_{ij}}\hskip 1mm\overline{x_i}-\overline{x_{ij}}\hskip 1mm \overline{y_i}}{(r_{ij}^k)^2}g\left(z_{ij}\right)+\dfrac{\overline{y_{ji}}\hskip 1mm \overline{x_j}-\overline{x_{ji}}\hskip 1mm \overline{y_j}}{(r_{ji}^k)^2}g\left(z_{ji}\right)\right) \\
-\dfrac{1}{2\pi}\sum\limits_{1\leq i\leq n} \Gamma_i \left(\sum\limits_{j,l=1, j,l\neq i}^n \Gamma_j\Gamma_l \dfrac{\overline{y_{ij}}}{(r_{ij}^k)^2}\dfrac{\overline{x_{il}}}{(r_{il}^k)^2}g\left(z_{ij}\right)g\left(z_{il}\right) - \sum\limits_{j,l=1, j,l\neq i}^n \Gamma_j\Gamma_l  \dfrac{\overline{x_{ij}}}{(r_{ij}^k)^2} \dfrac{\overline{y_{il}}}{(r_{il}^k)^2}g\left(z_{ij}\right)g\left(z_{il}\right) \right)
\end{pmatrix}\\
&= \bb 0,
\] where the last equality follows from
\[
r_{ji}^k &= r_{ij}^k, \\
z_{ji} &= z_{ij},\\
\overline{y_{ji}}&=-\overline{y_{ij}}, \\
\overline{x_{ji}}&=-\overline{x_{ij}}, \\
\overline{y_{ji}}\hskip 1mm\overline{x_j}-\overline{x_{ji}}\hskip 1mm\overline{y_j} &= \overline{y_i}\hskip 1mm\overline{x_j}-\overline{x_i}\hskip 1mm\overline{y_j}=-(\overline{y_{ij}}\hskip 1mm\overline{x_i}-\overline{x_{ij}}\hskip 1mm\overline{y_i}).
\]
To verify condition \eqref{discMultCond1}, it follows from direct computation or divided difference calculus that

\[
\Delta (x_i^2+y_i^2) &= 2\overline{x_i}\Delta x_i+2\overline{y_i}\Delta y_i,\\
\Delta (r_{ij})^2 &= 2\overline{x_i}\Delta x_i+2\overline{y_i}\Delta y_i,\\
\dfrac{g\left(z_{ij}\right)}{(r_{ij}^{k})^2} &= \dfrac{2(\log r_{ij}^{k+1}- 2\log r_{ij}^{k})}{\Delta (r_{ij})^2},\\
\Delta \log(r_{ij}) &= (\log r_{ij}^{k+1}- \log r_{ij}^{k})\dfrac{2\overline{x_{ij}}\Delta x_i+2\overline{y_{ij}}\Delta y_i}{\Delta (r_{ij})^2} = \left(\overline{x_{ij}}\Delta x_i+\overline{y_{ij}}\Delta y_i\right)\dfrac{g\left(z_{ij}\right)}{(r_{ij}^{k})^2}.
\]
Combining with linearity of $\Delta$, condition \eqref{discMultCond1} is satisfied since,

\[
\Lambda^\tau D_t^\tau \bb x &= \frac{1}{\tau}\begin{pmatrix}
\sum\limits_{1\leq i\leq n} \Gamma_i\Delta x_i \\
\sum\limits_{1\leq i\leq n} \Gamma_i\Delta y_i \\
\sum\limits_{1\leq i\leq n} \Gamma_i(2\overline{x_i}\Delta x_i+2\overline{y_i}\Delta y_i) \\
-\dfrac{1}{2\pi}\sum\limits_{1\leq i\neq j\leq n} \Gamma_i \Gamma_j \left(\overline{x_{ij}}\Delta x_i+\overline{y_{ij}}\Delta y_i\right)\dfrac{g\left(z_{ij}\right)}{(r_{ij}^k)^2}
\end{pmatrix}\]
\[
&= \frac{1}{\tau}\begin{pmatrix}
\Delta \left(\sum\limits_{1\leq i\leq n} \Gamma_ix_i\right) \\
\Delta \left(\sum\limits_{1\leq i\leq n} \Gamma_i y_i\right) \\
\Delta \left(\sum\limits_{1\leq i\leq n} \Gamma_i(x_i^2+y_i^2) \right) \\
-\dfrac{1}{2\pi}\sum\limits_{1\leq i<j\leq n} \Gamma_i \Gamma_j \left(\overline{x_{ij}}\Delta x_{ij}+\overline{y_{ij}}\Delta y_{ij}\right)\dfrac{g\left(z_{ij}\right)}{(r_{ij}^k)^2}
\end{pmatrix}
= \frac{1}{\tau}\begin{pmatrix} \Delta \bb P\\ \Delta L \\ \Delta H\end{pmatrix} = D_t^\tau \bb \psi - \partial_t^\tau \bb \psi.
\] 

Thus, the scheme \eqref{pvPlaneDisc} is indeed conservative for~\eqref{pvPlane}. Moreover, \eqref{pvPlaneDisc} is symmetric since $\overline{x}_{ij}, \overline{y}_{ij}$ and $\dfrac{1}{(r_{ij}^k)^2}g(z_{ij})= \dfrac{1}{(r_{ij}^k)^2}g\left(\left(\dfrac{r_{ij}^{k+1}}{r_{ij}^k}\right)^2\right)$ is symmetric under the permutation of $k\leftrightarrow k+1$, by Lemma \ref{lem:sym} with $h(\bb x^k) = (r_{ij}^k)^2$.

\subsection{Point vortex problem: Spherical case}

Finally, we verify the conserved quantities of \eqref{pvSphere} and show that the derived scheme \eqref{pvSphereDisc} is conservative and symmetric using Lemma \ref{lem:sym}.

The conserved vector $\bb \psi$ of \eqref{eq:ConservedQuantitiesSphericalPointVortex} for point vortex problem on the unit sphere \eqref{pvSphere} is
\[
\bb \psi(\bb x) := \begin{pmatrix} 
\bb P(\bb x) \\
H(\bb x)
\end{pmatrix}.
\] 
Using~\eqref{multCond1}, the associated $4 \times (3n)$ multiplier matrix is given by,
\[
\Lambda(\bb x) := \begin{pmatrix}
\begin{bmatrix}\Gamma_iI\end{bmatrix}_{1\leq i\leq n}^T\\
\begin{bmatrix}\dfrac{1}{4\pi}\Gamma_i\sum\limits_{j=1, j\neq i}^n \Gamma_j \dfrac{{\bb x}_i-{\bb x}_j}{1-{\bb x}_i\cdot{\bb x}_j}\end{bmatrix}_{1\leq i\leq n}^T
\end{pmatrix}
\]
where $I$ is the $3\times 3$ identity matrix. So \eqref{multCond2} is satisfied since,
\[
\Lambda(\bb x) \bb f(\bb x) &= \begin{pmatrix}
\dfrac{1}{4\pi}\sum\limits_{1\leq i\neq j\leq n} \Gamma_i\Gamma_j \dfrac{{\bb x}_j\times {\bb x}_i}{1-{\bb x}_j\cdot {\bb x}_i}\\
\dfrac{1}{4\pi}\Gamma_i\sum\limits_{j=1, j\neq i}^n \Gamma_j \dfrac{{\bb x}_i-{\bb x}_j}{1-{\bb x}_i\cdot{\bb x}_j}\cdot\dfrac{1}{4\pi}\sum\limits_{k=1, k\neq i}^n \Gamma_k \dfrac{{\bb x}_k\times {\bb x}_i}{1-{\bb x}_i\cdot {\bb x_k}}
\end{pmatrix} \\
&=
\begin{pmatrix}
\dfrac{1}{4\pi}\sum\limits_{1\leq i<j\leq n} \Gamma_i\Gamma_j \left(\dfrac{{\bb x}_j\times {\bb x}_i}{ 1-{\bb x}_j\cdot {\bb x}_i}+\dfrac{{\bb x}_i\times {\bb x}_j}{ 1-{\bb x}_j\cdot {\bb x}_i}\right) \\
\dfrac{1}{16\pi^2}\sum\limits_{i=1}^n\Gamma_i\sum\limits_{1\le j<k\le n}\Gamma_j\Gamma_k\left(\dfrac{{\bb x}_j\cdot ({\bb x}_k\times {\bb x}_i)+{\bb x}_k\cdot ({\bb x}_j\times {\bb x}_i)}{(1-{\bb x}_i\cdot {\bb x}_j)(1-{\bb x}_i\cdot {\bb x}_k)}\right)
\end{pmatrix} = \bb 0,
\]
where the last equality follows from the antisymmetry of the cross product and the antisymmetry of the scalar triple product.

As before, we choose $D_t^\tau \bb x$, $D_t^\tau \bb \psi$, $\partial_t^\tau \bb \psi$, $\Lambda^\tau$ and $\bb f^\tau$ such that both conditions \eqref{discMultCond1} and \eqref{discMultCond2} are satisfied. We start by defining
\[
D_t^\tau \bb x := \dfrac{\Delta \bb x}{\tau}, \hskip 5mm D_t^\tau \bb \psi := \dfrac{1}{\tau}\begin{pmatrix}
\Delta \bb P \\
\Delta H
 \end{pmatrix}, \hskip 5mm
\partial_t^\tau \bb \psi := \bb 0.
\]
The discrete multiplier $\Lambda^\tau$ and the discrete right hand side $\bb f^\tau$ are defined as,
\[
&\Lambda^\tau(\bb x^{k+1},\bb x^k) := \begin{pmatrix}
\begin{bmatrix}\Gamma_iI\end{bmatrix}_{1\leq i\leq n}^T\\
\begin{bmatrix}\dfrac{1}{4\pi}\Gamma_i\sum\limits_{j=1, j\neq i}^n \Gamma_j \dfrac{\overline{{\bb x}_{ij}}}{1-{\bb x}_i^k\cdot{\bb x}_j^k}g(z_{ij})\end{bmatrix}_{1\leq i\leq n}^T
\end{pmatrix},\\
&\bb f^\tau(\bb x^{k+1},\bb x^k) := 
\dfrac{1}{4\pi}\sum\limits_{1\leq j \leq n, j\neq i} 
\Gamma_j \dfrac{\overline{{\bb x}_j}\times\overline{{\bb x}_i}}{1-{{\bb x}_i^k}\cdot{{\bb x}_j^k}}g\left(z_{ij}\right),
\] 
where for brevity we denoted $\displaystyle z_{ij} :=\dfrac{1-{\bb x}_i^{k+1}\cdot{\bb x}_j^{k+1}}{ 1-{\bb x}_i^{k}\cdot{\bb x}_j^{k}}$. As in the planar case, $\displaystyle g(z_{ij}) \rightarrow 1$ as $z_{ij}\rightarrow 1$ when $\tau \rightarrow 0$ and so both $\Lambda^\tau$ and $\bb f^\tau$ are consistent to $\Lambda$ and $\bb f$, respectively.

Similar to the planar case, it is readily verified that we have $\Lambda^\tau \bb f^\tau=\bb 0$ and thus it remains to check condition~\eqref{discMultCond1}. Indeed, this condition is satisfied as
\[
 \Lambda^\tau D_t^\tau \bb x &= \frac{1}{\tau}\begin{pmatrix}
\sum\limits_{1\leq i\leq n} \Gamma_i\Delta {\bb x}_i \\
\dfrac{1}{4\pi}\sum\limits_{1\leq i\neq j\leq n} \Gamma_i \Gamma_j\dfrac{\overline{{\bb x}_{ij}}\cdot\Delta {\bb x}_{i}}{1-{\bb x}_i^k\cdot{\bb x}_j^k}g\left(z_{ij}\right)
\end{pmatrix} \\
&= \frac{1}{\tau}\begin{pmatrix}
\Delta \left(\sum\limits_{1\leq i\leq n} \Gamma_i{\bb x}_i\right) \\
\dfrac{1}{4\pi}\sum\limits_{1\leq i<j\leq n} \Gamma_i \Gamma_j \dfrac{\overline{{\bb x}_{ij}}\cdot\Delta {\bb x}_{ij}}{1-{\bb x}_i^k\cdot{\bb x}_j^k}g\left(z_{ij}\right)
\end{pmatrix}
= \frac{1}{\tau}\begin{pmatrix} \Delta \bb P\\ \Delta H\end{pmatrix} = D_t^\tau \bb \psi - \partial_t^\tau \bb \psi.
\] 
Hence, the discretization \eqref{pvSphereDisc} is conservative for~\eqref{pvSphere}. Finally, \eqref{pvSphereDisc} is symmetric since $\overline{\bb x}_{j}\times \overline{\bb x}_{i}$, and $\dfrac{1}{1-{\bb x}_i^k\cdot{\bb x}_j^k}g(z_{ij}) = \dfrac{1}{1-{\bb x}_i^k\cdot{\bb x}_j^k}g\left(\dfrac{1-{\bb x}_i^{k+1}\cdot{\bb x}_j^{k+1}}{ 1-{\bb x}_i^{k}\cdot{\bb x}_j^{k}}\right)$ is symmetric under the permutation of $k\leftrightarrow k+1$, by Lemma \ref{lem:sym} with $h(\bb x^k) = 1-{\bb x}_i^k\cdot{\bb x}_j^k$.

\end{document}